\documentclass[12pt]{amsart}

\usepackage{stmaryrd}

\usepackage{cite}

\usepackage{amsmath,amssymb}
\usepackage{tikz}
\usetikzlibrary{patterns}

\usepackage{amsfonts,color}
\usepackage{latexsym,amssymb}
\usepackage{amsmath}
\usepackage[utf8]{inputenc}
\usepackage{enumitem}

\newtheorem{theorem}{Theorem}[section]
\newtheorem{lemma}[theorem]{Lemma}
\newtheorem{proposition}[theorem]{Proposition}

\newcommand{\R}{\mathbb{R}}

\usepackage[bookmarksnumbered,colorlinks, linkcolor=blue, citecolor=red, bookmarks, breaklinks]{hyperref}

\catcode`@=11 \@addtoreset{equation}{section} \catcode`@=12

\usepackage[left=3cm,right=3cm,top=3cm,bottom=3cm]{geometry}

\begin{document}

\title[On Grushin--Schrödinger equation in $\R^N$ ]{On semilinear Grushin--Schrödinger equation in $\R^N$}

\author[J. Carvalho]{J. Carvalho}
\address{Universidade Federal de Sergipe, Departamento de Matem\'atica, 49100-000 São Cristóvão-SE, Brazil}
\email{jonison@mat.ufs.br}

\author[A. Viana]{A. Viana}
\address{Universidade Federal de Sergipe, Departamento de Matem\'atica, 49100-000 São Cristóvão-SE, Brazil}  
\address{Universit\"at Ulm, Institut f\"ur Angewandte Mathematik, 89069, Ulm-BW, Germany}
\email{arlucioviana@academico.ufs.br}

\keywords{Grushin operators, Subelliptic operators, Degenerate elliptic equations}

\subjclass[2020]{35J70, 35A15, 35J61, 35H20, }

\begin{abstract} 
	We establish the existence of nontrivial nonnegative  weak solutions to the following equation  
	\begin{equation*}
		-\Delta_\gamma u + V(z)u = Q(z)f(u), \quad z\in \mathbb{R}^N, 
	\end{equation*}
	where $\Delta_\gamma $ denotes the so-called Grushin-type operator in $\mathbb{R}^N$. The potentials $V$ and $Q$ are assumed to be controlled below and above, respectively, by functions of type $(1+|z|)^a$,  $a\in\R$. The main result is the embedded of the space $E_V^\gamma$  into the weighted Lebesgue space $L_Q^p(\R^N)$, under suitable conditions. Finally, we derive regularity results for the obtained weak solutions.

\end{abstract}

\maketitle

\section{Introduction and main results}

Consider the  Grushin-type operator $\Delta_\gamma$ defined by
\begin{equation}\label{grushin}
	\Delta_\gamma u (z) := \Delta_x u(z) + |x|^{2\gamma}\Delta_y u(z),
\end{equation}
where $\gamma>0$, $z=(x,y) \in \mathbb{R}^m\times \mathbb{R}^k$, with $m+k = N\geq3$, and $\Delta_x$ and $\Delta_y$ denote the classical Laplacians with respect to the variables $x$ and $y$, respectively. The operator defined in \eqref{grushin} was introduced in \cite{Grushin,grushin-71}. 
Such operators naturally arise in degenerate and anisotropic diffusion problems. 
For integer values of $\gamma$, these operators belong to the general class of  H\"ormander operators \cite{Hormander-67}. For positive real values of $\gamma$, they represent the simplest example of the $\Delta_\lambda$-operators introduced by Kogoj and Lanconelli in  \cite{Kogoj-Lan-12}, and they also fall into the broader family of $X$-elliptic operators studied in \cite{Lan-Kog-00}. Franchi and Lanconelli were the first to introduce and study a metric and the associated underlying sub-Riemannian structure for these types of operators in their seminal works of the early 1980s \cite{Fr-La-83, Fr-La-84}.

Degenerate second-order elliptic operators have remained an object of interest in recent years, see e.g. \cite{Ab-Fe-Luz-25, Ba-Fur-I-15, Biagi-Monticelli-Punzo}. On the other hand, the study of semilinear equations involving these type of operators is still developing, in particular, there are relatively few results concerning semilinear elliptic equations with  Grushin-type operators, see e.g. \cite{Alves-G-Lo-24,Alves-deH-24,B-Radu-21,Du-Ph-17, La-Pa-02,LTW-20, Loi-19}. 

We mention two recent contributions in this direction. In \cite{Loi-19}, Loiudice provides the asymptotic behavior of solutions - both near the singularity and at infinity - for the Dirichlet problem of the form
$$
\begin{cases}-\Delta_\gamma u-\mu \dfrac{\psi^2}{d^2} u=K(z)|u|^{2^*_\gamma-2} u ,& \text { in } \Omega ,\\ u=0 ,& \text { on } \partial \Omega,\end{cases} 
$$
with $2^*_\gamma=\frac{2 N_\gamma}{N_\gamma-2}$, $N_\gamma=m+(\gamma+1) k$, and $\psi:=\left|\nabla_\gamma d\right|$, where
$$d(z)=\left(|x|^{2(\gamma+1)}+(\gamma+1)^2|y|^2\right)^{\frac{1}{2(\gamma+1)}} \mbox{\quad and\quad } \nabla_\gamma d = (\nabla_x d , |x|^\gamma \nabla_y d).$$
Here \(\Omega\) is an open subset of \(\mathbb{R}^N\), \(0\in \Omega\), $K \in L^{\infty}(\Omega)$, and $  0 \leq \mu<\left(\frac{N_\gamma-2}{2}\right)^2$.

In \cite{Alves-Angelo}, Alves and de Holanda proved the 
existence of nontrivial nonnegative solutions to the semilinear elliptic equation
\begin{equation}\label{grushinC}
	-\Delta _{\gamma }u+a(z) u=f(u), \quad z\in \mathbb{R}^N,
\end{equation}
with \(N \geq3\), under the following assumptions on the potential $a$:
\begin{enumerate}
    \item $a(z) \geq a_0>0, \text { for all } z \in \mathbb{R}^N$;
    \item $
a(x, y)=a\left(x^{\prime}, y\right)$ whenever $|x|=\left|x^{\prime}\right|$;
\item either the function $a(x,y)$ periodic in the $y$-variable $y$: $$ a(x, y)=a\left(x, y+y_0\right), \text { for all } y_0 \in \mathbb{Z}^k,
$$
or coercive in $y$, that is,
$$
a(x, y) \longrightarrow \infty, \text { as }|y| \longrightarrow \infty, \text { uniformly for } x \in \mathbb{R}^m .
$$
\end{enumerate}
Furthermore, the authors also consider the case \(a(z)=0\), known as the zero mass case. 

The main goal of the present work is to establish the compact embedding $$E_V^\gamma \hookrightarrow L^p_Q(\mathbb{R}^N) ,$$ under suitable conditions on $V$ and $Q$. As consequence, we prove the existence of nonnegative weak solutions to the equation
\begin{equation}\tag{$\mathcal{P}$}\label{P}
	-\Delta_\gamma u + V(z)u = Q(z)f(u), \quad z\in \mathbb{R}^N.
\end{equation}
In this work, we assume instead the following assumptions on the potentials:
\begin{enumerate}[label=($V$),ref=$(V)$]
	\item \label{V} $V: \mathbb{R}^N\to \mathbb{R}$ is a measurable function and there exist constants  $C_V>0$ and $\alpha\in\mathbb{R}$ such that
	$$
	\frac{C_V}{\left(1+|z|\right)^\alpha}\leq V(z), \quad\mbox{for a.e. } z\in\mathbb{R}^N;
	$$
\end{enumerate}

\begin{enumerate}[label=($Q$),ref=$(Q)$]
	\item \label{Q} $Q: \mathbb{R}^N\to \mathbb{R}$ is a measurable function and there there exist constants   $C_Q>0$ and $\beta\in\mathbb{R}$ such that
	$$
	0<Q(z)\leq \frac{C_Q}{\left(1+|z|\right)^\beta}, \quad\mbox{for a.e. } z\in\mathbb{R}^N.
	$$
\end{enumerate}

Our problem also differs from \eqref{grushinC} due to the presence of the potential $Q(z)$ multiplying $f(u)$; besides that, we adopt distinct assumptions on $V$, without requiring any symmetry, which prevents us from following Strauss's approach  \cite{Strauss} by proving a Radial Lemma, as employed in \cite{Su-Wa-Wi-07}. Instead, we decompose the exterior domain into annular regions - a technique similar to the one presented in \cite{OK-90} and successfully applied in \cite{Carvalho-Furtado-Medeiros-AMPA,Car-Fu-Me-22}. The same technique was used by Ambrosetti \textit{et al.} \cite{Amb-Fel-Mal-04} for a similar class of nonlinear Schrödinger equations involving the Laplacian rather than the Grushin operator. To apply this technique, we prove an auxiliary Lemma which relies on the Sobolev embeddings obtained by Kogoj and Lanconelli \cite{Kogoj-Lan-12}: given a bounded domain $\Omega \subset \R^N$ and $p\in [1,2^*_\gamma]$, then
\begin{equation*}
H_0^{1,\gamma}(\Omega) \hookrightarrow L^p(\Omega).
\end{equation*}
Here, 
$$H_0^{1,\gamma}(\Omega) :=  \overline{C_0^1(\Omega)}^{\| \cdot \|_{H^{1,\gamma}}},$$
and
\begin{equation}\label{norm}
	\|u\|_{H^{1,\gamma}} :=  \left(\int_{\Omega} |\nabla_\gamma u|^2\ \,\mathrm{d}z\right)^{\frac{1}{2}} . 
\end{equation}
As Sobolev embeddings are naturally of interest, we mention that, in the case of the whole space, Monti and Morbidelli \cite{Mo-Mo-06} write that the Sobolev embedding is a consequence of the Poincaré inequality and the representation formula proved in \cite{Fr-La-83} and \cite{FLW-96}, respectively:
\begin{equation}\label{monti}
    H^{1,\gamma}\left(\mathbb{R}^N\right) \hookrightarrow L^{2_\gamma^*}\left(\mathbb{R}^N\right) ,
\end{equation}
where $2_\gamma^*=\frac{2 N_\gamma}{N_\gamma-2}$. See also \cite{Monti06}. More recently, Abatangelo, Ferrero and Luzzini \cite{Ab-Fe-Luz-25} studied the asymptotic behaviors and unique continuation principles for weak solutions to the linear Grushin equation with potential in bounded domains, that is, \eqref{P}, with $Q\equiv0$. They also provide Sobolev embeddings under certain conditions.

Next, we make precise the spaces we are going to work with. We consider $E^\gamma_V$ as the completion of $C_0^\infty(\mathbb{R}^N)$ with respect to the norm
$$
\|u\|_{E^\gamma_V} := \left(\int_{\mathbb{R}^N}\left[|\nabla_\gamma u|^2 + V(z)u^2\right]\,\mathrm{d}z \right)^{\frac{1}{2}}.
$$
This norm is induced by the following inner product:
\[
\left\langle u, v\right\rangle_{E_V^\gamma}:=\int_{\mathbb{R}^N}\left(\nabla_\gamma u \cdot \nabla_\gamma v + V(z)uv\right)\,\mathrm{d}z,
\]
and hence $E^\gamma_V$ is a Hilbert space.

For $1\leq p<\infty$, we define
$$
L^p_Q(\mathbb{R}^N):=\left\lbrace u:\mathbb{R}^N \to \mathbb{R} \   \mbox{measurable} : \int_{\mathbb{R}^N}Q(z)|u|^p\,\mathrm{d}z<\infty \right\rbrace, 
$$
with respect to the norm
$$
\|u\|_{L^p_Q(\mathbb{R}^N)}:=\left(\int_{\mathbb{R}^N}Q(z)|u|^p\,\mathrm{d}z\right)^{\frac{1}{p}}. 
$$

Our main results are the following.

\begin{theorem}[Weighted Sobolev Embedding]
	\label{imersao} Assume that \ref{V} and \ref{Q} hold. Then the embedding $E_V^\gamma \hookrightarrow L^p_Q(\mathbb{R}^N)$ is continuous  if one of the following conditions holds: 	
	\begin{itemize}
		\item [(i)] $\alpha \leq  N < \beta$ and $2\leq p \leq 2_\gamma^*$;
		
		\item[(ii)] $N
        \leq\alpha < \beta$ and $2\leq p \leq \overline{p}$,
		where
		$$
		\overline{p}:=\min\left\lbrace \frac{2(\beta-N)}{(\alpha-N)},2_\gamma^* \right\rbrace;
		$$
		
		\item [(iii)] $\alpha\leq N$, $ \underline{\beta} \leq \beta <N$, and $\underline{p}\leq p\leq2_\gamma^*$,
		where
		$$
		\underline{\beta}:=\max\left\lbrace N-\frac{2_\gamma^*(N-2)}{2},N-\frac{2_\gamma^*(N-\alpha)}{2}\right\rbrace 
		$$
		and
		$$
		\underline{p}:=\max\left\lbrace 2, \frac{2(N-\beta)}{(N-2)}, \frac{2(N-\beta)}{(N-\alpha)} \right\rbrace.
		$$

	\end{itemize}
	Furthermore, the embedding is compact whenever:
	\begin{itemize}
		\item [(i')] $\alpha \leq  N < \beta$ and $2\leq p < 2_\gamma^*$;
		
		\item[(ii')] $N<\alpha\leq\beta$ and $2\leq p < \overline{p}$.

	\end{itemize}
\end{theorem}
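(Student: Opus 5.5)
The proof will follow the annular decomposition technique of \cite{OK-90, Amb-Fel-Mal-04}. We split $\R^N = B_{R_0}\cup \bigcup_{j\ge 0} A_j$, where $A_j=\{2^{j}R_0\le |z|< 2^{j+1}R_0\}$ (or, more simply, unit-scale annuli of the form $\{R\le|z|<R+1\}$). On each piece the weights are essentially constant: $V\gtrsim r^{-\alpha}$ and $Q\lesssim r^{-\beta}$ with $r\sim 2^jR_0$. The central tool is an auxiliary scaled Sobolev inequality on annuli. Take a fixed model annulus $A=\{1<|z|<2\}$ (with a slightly larger neighbourhood $\tilde A$). Using the Kogoj--Lanconelli embedding $H_0^{1,\gamma}(\Omega)\hookrightarrow L^p(\Omega)$ after multiplying by a cutoff supported in $\tilde A$, we get
\[
\|u\|_{L^p(A)}\le C\Big(\int_{\tilde A}|\nabla_\gamma u|^2+u^2\Big)^{1/2},\qquad 1\le p\le 2^*_\gamma .
\]
We then rescale this inequality to $A_j$. \emph{This rescaling step is the main obstacle.} The Grushin gradient is not homogeneous under the Euclidean dilation $z\mapsto Rz$: the $y$-part picks up $|x|^{\gamma}$, which is comparable to $R^\gamma$ only away from $x=0$. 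There are two ways to handle this. The first is to use the anisotropic dilation $\delta_R(x,y)=(Rx,R^{\gamma+1}y)$, under which $\nabla_\gamma$ scales like $R^{-1}$ and $dz$ like $R^{N_\gamma}$; one then compares Euclidean annuli with $\delta$-annuli, which are mutually comparable up to finitely many pieces in a polynomially growing fashion. The second, which I would try first because the final exponents involve only $N$ and not $N_\gamma$, is to cover $A_j$ by unit cubes and apply the Kogoj--Lanconelli inequality uniformly on translates. This uniformity holds provided the constant does not deteriorate under $y$-translations (the operator is $y$-invariant) or under $x$-translations with $|x|$ large, since there $|x|^{2\gamma}\ge 1$ and the operator is uniformly elliptic. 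The outcome should be an estimate of the form
\[
\int_{A_j}|u|^p\le C\,r_j^{\,\theta}\Big(\int_{\tilde A_j}|\nabla_\gamma u|^2+r_j^{-\alpha}u^2\Big)^{p/2}
\]
for an explicit exponent $\theta=\theta(N,\alpha,p)$. When $\alpha\le N$ and $p\ge2$ this is obtained by interpolating between the $L^2$ bound with weight $r^{-\alpha}$ and the $L^{2^*_\gamma}$ bound, using Hölder on the annulus, whose volume is $\sim r^N$.

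Next we multiply by $Q\lesssim r_j^{-\beta}$ and sum over $j$. Since $p/2\ge1$, we have $\sum_j a_j^{p/2}\le(\sum_j a_j)^{p/2}$, and the overlaps of the $\tilde A_j$ are bounded. Hence
\[
\int_{|z|>R_0}Q|u|^p\le C\sup_j r_j^{\theta-\beta}\,\|u\|_{E_V^\gamma}^p .
\]
Continuity then reduces to the condition $\theta-\beta\le0$. Checking this condition case by case should reproduce exactly the ranges in the statement: in case (i) it holds for all $p\le2^*_\gamma$, since $\beta>N$; in case (ii) it yields $p\le 2(\beta-N)/(\alpha-N)$; and in case (iii) it yields $p\ge\underline p$, with $\underline\beta$ being the condition that guarantees $\underline p\le 2^*_\gamma$. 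On the ball $B_{R_0}$, where $Q$ is bounded and $V$ is bounded below, the local embedding gives $\int_{B_{R_0}}Q|u|^p\le C\|u\|^p_{E_V^\gamma}$ directly.

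For compactness, let $u_n\rightharpoonup0$ in $E_V^\gamma$. In cases (i') and (ii') the inequalities $\theta-\beta<0$ are strict. Therefore the tail $\sum_{r_j>R}$ is bounded by $o_R(1)\sup_n\|u_n\|^p$, uniformly in $n$. On the bounded region $B_R$, the Kogoj--Lanconelli embedding is compact for $p<2^*_\gamma$, which is obtained by localizing with a cutoff and applying Rellich-type compactness for $H_0^{1,\gamma}$ on bounded domains. Hence $\int_{B_R}Q|u_n|^p\to0$. Letting first $n\to\infty$ and then $R\to\infty$ gives $u_n\to0$ in $L^p_Q$. Where $p=\overline p$ equals the threshold, compactness fails, which is why the ranges in (i') and (ii') are strict.
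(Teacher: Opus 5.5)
\textbf{There is a genuine gap: the step that carries the whole content of the theorem, the value of $\theta$, is never established, and the route you would try first cannot give the stated ranges.} The overall architecture is the same as the paper's: dyadic annuli, a cutoff with the Kogoj--Lanconelli embedding on a fixed bounded set, rescaling, then summation.

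Covering $A_j$ by unit cubes with a position-independent local constant yields only
\[
\int_{A_j}|u|^p\le C\Big(\int_{\tilde A_j}|\nabla_\gamma u|^2+u^2\Big)^{p/2}\le C\,r_j^{\frac p2\max\{\alpha,0\}}\Big(\int_{\tilde A_j}|\nabla_\gamma u|^2+Vu^2\Big)^{p/2}.
\]
So $\theta=\frac p2\max\{\alpha,0\}$: no $N$ appears, and the gradient contributes no decay. The condition $\theta\le\beta$ then becomes an upper bound $p\le 2\beta/\alpha$, which fails in all three cases:
\begin{itemize}
\item In (iii) it can never produce the lower bound $p\ge\underline p$. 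Take $N=3$, $m=2$, $k=1$, $\gamma=1$ (so $2^*_\gamma=4$), $\alpha=2$, $\beta=3/2$. The statement gives $p\in[3,4]$, while unit cubes require $p\le 3/2$.
\item In (ii) it falls strictly below $2(\beta-N)/(\alpha-N)$, since $\beta>\alpha$.
\item In (i) it fails for $\alpha=N$, $\beta$ slightly above $N$, and $p$ near $2^*_\gamma$.
\end{itemize}
Interpolating, or applying H\"older on $A_j$ with $|A_j|\sim r_j^N$, only adds nonnegative powers of $r_j$, so it cannot repair this. The anisotropic route is not carried out either. A Euclidean annulus of radius $r$ meets $\delta$-annuli of radii from about $r^{1/(\gamma+1)}$ up to $r$, not finitely many, and the exponents would come out in terms of $N_\gamma$. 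Your assertion that the case check ``should reproduce exactly the ranges'' is therefore unsupported.

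\textbf{The missing observation is that the Euclidean dilation is not an obstacle at all.} For $u_j(w)=u(2^jw)$ with $w=(x,y)$,
\[
\nabla_\gamma u_j(w)=2^j\big(\nabla_x u(2^jw),\,2^{-\gamma j}|2^jx|^\gamma\nabla_y u(2^jw)\big),
\]
so $|\nabla_\gamma u_j(w)|\le 2^j|\nabla_\gamma u(2^jw)|$. The degeneracy only helps. Applying the model inequality on $A_0$ to $u_j$ and changing variables gives
\[
\int_{A_j}|u|^p\le C\,2^{Nj}\Big(2^{(2-N)j}\int_{\tilde A_j}|\nabla_\gamma u|^2+2^{-Nj}\int_{\tilde A_j}u^2\Big)^{p/2}.
\]
Combine this with $\int_{\tilde A_j}u^2\le C2^{\alpha j}\int_{\tilde A_j}Vu^2$ (valid for either sign of $\alpha$) and $Q\le C2^{-\beta j}$ on $A_j$. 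The result is $\theta-\beta=\max\{\zeta_1,\zeta_2\}$, where
\[
\zeta_1=\tfrac{(2-N)p}{2}-(\beta-N),\qquad \zeta_2=\tfrac{(\alpha-N)p}{2}-(\beta-N),
\]
that is, $\theta=N-\frac p2\,(N-\max\{2,\alpha\})$. The $N$ in the ranges comes from the Jacobian $2^{Nj}$ balanced against these negative powers, not from the volume of the annulus. The conditions $\zeta_1\le0$, $\zeta_2\le0$ are exactly (i)--(iii); this is what the paper does.

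\textbf{What is good in your proposal.} With this estimate in place, your summation via $\sum_j a_j^{p/2}\le(\sum_j a_j)^{p/2}$ is actually better than the paper's. The paper bounds each annular piece by the full norm and sums $\sum_j 2^{\zeta_i j}$. That needs $\zeta_i<0$ strictly, so it does not reach $p=\overline p$ in (ii) or $p=\underline p>2$ in (iii), where one $\zeta_i$ vanishes. Your version needs only $\sup_j 2^{\max\{\zeta_1,\zeta_2\}j}<\infty$.

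Your compactness argument also works for both (i') and (ii'): strict negativity makes the tail uniformly small, and local compactness handles $B_R$. Note, though, that in (i') the tail exponents are negative even at $p=2^*_\gamma$, so the strict inequality there comes only from the local embedding. Your closing claim that compactness fails at the threshold is neither needed nor proved.
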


We say that a function $u\in E_V^\gamma$ is a weak solution to problem \eqref{P}, if for any $\varphi \in C_0^\infty(\mathbb{R}^N)$, it holds that
$$
\int_{\mathbb{R}^N}\left(\nabla_\gamma u \cdot \nabla_\gamma \varphi + V(z)u\varphi\right)\,\mathrm{d}z = \int_{\mathbb{R}^N}Q(z)f(u)\varphi\,\mathrm{d}z.
$$

Theorem \ref{imersao} helps us to prove the existence of nontrivial nonnegative weak solutions, which is Theorem \ref{existencia_de_solucao} below. In its proof, we will consider only the case $\alpha\leq N < \beta$, since the other cases are similar. Moreover, \(f : \mathbb{R} \to \mathbb{R}\) is continuous and we shall assume that

\vspace{0,2cm}

\begin{enumerate}[label=($f_1$),ref=$(f_1)$]
	\item \label{(f_1)} 
    \(\displaystyle\lim_{s\to 0}\dfrac{f(s)}{s}=0\);
\end{enumerate}

\begin{enumerate}[label=($f_2$),ref=$(f_2)$]
	\item \label{(f_2)} there exists $q \in\left(2,2_\gamma^*\right)$ such that	
	$$
	\limsup _{|s| \rightarrow\infty} \frac{|f(s)|}{|s|^{q-1}}<\infty;
	$$
\end{enumerate}

\begin{enumerate}[label=($f_3$),ref=$(f_3)$]
	\item \label{(f_3)} there exists $\theta>2$ such that
	$$
	0<\theta F(s) \leq f(s) s, \quad \forall \, s \in \mathbb{R} \backslash\{0\},
	$$	
	where $F(t):=\int_0^t f(s) \,\mathrm{d} s$.
\end{enumerate}

Before stating the next result, we define the space 
$$W_\gamma^{2, p}(\R^N)  =\left\{u \in L^p\left(\mathbb{R}^N\right): |x|^{|\beta|\gamma}D^\alpha_x D^\beta_yu \in L^p\left(\mathbb{R}^N\right), \ \mbox{for multi-index} \ |\alpha|+|\beta|\leq 2\right\} .$$
In particular, denoting $|x|^{|\beta|\gamma}D^\alpha_x D^\beta_y u$ by $ D_\gamma^\alpha u $, we will endow $W_\gamma^{2, p}(\R^N)$ with the norm
\begin{equation}\label{w2norm}
	\|u\|_{W^{2,p}_\gamma}  =  \sum_{|\alpha|=2 } \|D_\gamma^\alpha u \|_{L^p(\R^N)} +\left\|\nabla_\gamma u\right\|_{L^p(\R^N)} + \| u\|_{L^p(\R^N)}. 
\end{equation}
See  Metafune, Negro and Spina \cite{Me-N-Spi-20}.

Finally, we present our results on the existence and regularity of solutions.

\begin{theorem}\label{existencia_de_solucao} Let the assumptions of the items $(i')$ and $(ii')$ in Theorem \ref{imersao} hold, and assume \ref{V}, \ref{Q}, and \ref{(f_1)}-\ref{(f_3)}. Then problem \eqref{P} has at
	least one nonnegative nontrivial weak solution $u\in E_V^\gamma$. Furthermore,
	\begin{itemize}
		\item [$(i)$] if $\dfrac{1}{Q} \in L^\infty_{\mathrm{loc}}(\R^N)$, then $u\in L^\infty_{\mathrm{loc}}(\mathbb{R}^N)$;

		\item[$(ii)$] if there exist $V_0, V_1>0$ such that $V_0\leq V(z)\leq V_1$, for all $z\in\R^N$, then $u \in W_\gamma^{2,\frac{2^*_\gamma}{q-1}}(\R^N)$.
	\end{itemize}
\end{theorem}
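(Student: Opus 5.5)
The existence part will come from the Mountain Pass Theorem applied to a truncated functional. Replace $f$ by $f^+(s):=f(s)$ for $s\ge0$ and $f^+(s):=0$ for $s<0$, set $F^+(t)=\int_0^t f^+$, and define on $E_V^\gamma$
$$
I(u)=\frac12\|u\|_{E_V^\gamma}^2-\int_{\R^N}Q(z)F^+(u)\,\mathrm{d}z .
\]
By \ref{(f_1)}--\ref{(f_2)}, for every $\varepsilon>0$ there is $C_\varepsilon$ with $|f^+(s)|\le \varepsilon|s|+C_\varepsilon|s|^{q-1}$. Since $2<q<2^*_\gamma$, the compact (in particular continuous) embeddings of Theorem \ref{imersao} for $p=2$ and $p=q$ under $(i')$ or $(ii')$ give $I\in C^1$, with $I'(u)\varphi=\langle u,\varphi\rangle-\int Q f^+(u)\varphi$. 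For $(ii')$ we also need $q<\overline p$. I will assume the intended hypothesis covers this, or else replace $q$ by the admissible range.

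Next I check the geometry. Using the growth bound, $I(u)\ge \frac12\|u\|^2-\varepsilon C\|u\|^2-C_\varepsilon C\|u\|^q$, so $I\ge\rho>0$ on a small sphere. From \ref{(f_3)}, $F(s)\ge c s^\theta$ for $s\ge1$. Taking $0\le\varphi\in C_0^\infty$, $\varphi\not\equiv0$, and using $Q>0$, we get $I(t\varphi)\to-\infty$. This yields a Palais--Smale sequence $(u_n)$ at the mountain pass level $c\ge\rho$. Boundedness follows from
$$
I(u_n)-\tfrac1\theta I'(u_n)u_n\ge(\tfrac12-\tfrac1\theta)\|u_n\|^2 ,
\]
since $f^+ s-\theta F^+\ge0$. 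Passing to a subsequence, $u_n\rightharpoonup u$ in $E_V^\gamma$ and $u_n\to u$ in $L^2_Q$ and $L^q_Q$ by compactness. Then $\int Q f^+(u_n)(u_n-u)\to0$ by Hölder with weight $Q$, so $\|u_n-u\|\to0$. Hence $I(u)=c>0$, which gives $u\ne0$, and $I'(u)=0$. Testing with $u^-$ gives $\|u^-\|^2=-\int Qf^+(u)u^-=0$, so $u\ge0$, and $u$ solves \eqref{P} weakly with $f$.

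For $(i)$ I will use a Moser iteration (or Brezis--Kato type) argument on balls, based on the local Sobolev embedding $H_0^{1,\gamma}(\Omega)\hookrightarrow L^{2^*_\gamma}(\Omega)$ of Kogoj--Lanconelli. Write the equation as $-\Delta_\gamma u+Vu=a(z)u$ with $a=Qf(u)/u$, so $|a|\le Q(\varepsilon+C|u|^{q-2})$. Test with $\eta^2 u\min(u,L)^{2(s-1)}$ for a cutoff $\eta$. The term $Vu\cdot(\text{test})\ge0$ can be dropped since $V>0$. Because $q<2^*_\gamma$, the subcritical growth makes $a\in L^{r}_{\rm loc}$ with $r>N_\gamma/2$, and the standard iteration gives $u\in L^\infty_{\rm loc}$. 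The role of $1/Q\in L^\infty_{\rm loc}$ is to translate the weighted integrability from the embedding, $u\in L^p_Q$, into unweighted local integrability on compacts. I expect this step to need the most care: one must check that the iteration works with the $\gamma$-gradient, i.e. that the chain rule $\nabla_\gamma(\eta u\,T_L(u)^{s-1})$ behaves as in the Euclidean case. It does, because $\nabla_\gamma$ is a first-order vector-field gradient.

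For $(ii)$, with $V_0\le V\le V_1$, we have $u\in H^{1,\gamma}(\R^N)$, hence $u\in L^{2^*_\gamma}$ by \eqref{monti}. Then $g:=Qf(u)-Vu+V_0u$ satisfies $|Qf(u)|\le C(|u|+|u|^{q-1})$, since $Q$ is bounded when $\beta>N>0$ (and I will assume $\beta\ge0$ in case $(ii')$). This places $Qf(u)\in L^{2^*_\gamma/(q-1)}$, and the $|u|$ part is handled locally via $(i)$ and globally by interpolation. Then $-\Delta_\gamma u+V_0u=g$, and I will invoke the $L^p$ estimates of Metafune--Negro--Spina \cite{Me-N-Spi-20} for $-\Delta_\gamma+\lambda$ on $\R^N$, which give $u\in W^{2,p}_\gamma$ with $p=2^*_\gamma/(q-1)$. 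The weak solution coincides with the strong one by uniqueness in $H^{1,\gamma}$. The delicate point is putting the linear term $(V-V_0)u$ into $L^p$. If that fails I will instead bound the nonlinear term in $L^p\cap L^2$ and bootstrap.
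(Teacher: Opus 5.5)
Your existence argument is the paper's: mountain pass, Palais--Smale via the compact embeddings into $L^2_Q$ and $L^q_Q$, then testing with $u^-$. You handle the truncation more carefully than the paper by taking $\varphi\ge0$ in the geometry. Your caveat that case $(ii')$ needs $q<\overline p$ is also a real point the paper glosses over.

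For (i) you take a different route. You run a local, unweighted Moser/Brezis--Kato iteration with $a=Qf(u)/u\in L^r_{\rm loc}$, $r>N_\gamma/2$, based on the Kogoj--Lanconelli embedding on balls. The paper instead iterates globally in the weighted spaces $L^{2^*_\gamma\sigma^j}_Q$, using $Q\in L^1$ and $E_V^\gamma\hookrightarrow L^{2^*_\gamma}_Q$, and uses $Q\ge Q_R$ on $B_R$ only at the very end. Your route is sound. Since Lemma \ref{embedding-ball} already gives $u\in L^{2^*_\gamma}(B_R)$ and $Q$ is locally bounded, your route does not actually need $1/Q\in L^\infty_{\rm loc}$; the role you assign to that hypothesis is superfluous.

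The genuine gap is in (ii) when $p=2^*_\gamma/(q-1)<2$, i.e. $q>1+2^*_\gamma/2$, which is a nonempty part of $(2,2^*_\gamma)$.

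\begin{itemize}
\item From $V\ge V_0$ you only know $u\in L^2\cap L^{2^*_\gamma}$, and interpolation gives nothing below $L^2$.
\item So neither $(V-V_0)u$ nor the $\varepsilon|u|$ part of your bound $|Qf(u)|\le C(|u|+|u|^{q-1})$ is shown to be in $L^p$. Moreover, $u\in L^p(\R^N)$ is itself part of the conclusion $u\in W^{2,p}_\gamma$.
\item Part (i) gives only local information, and bootstrapping in $L^p\cap L^2$ cannot produce integrability below $L^2$.
\end{itemize}

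Discarding $Q$ is what costs you on the nonlinear term. The paper keeps the weight, $\int Q^s|u|^s\le C_Q^{s-1}\|Q\|_{L^1}^{1-s/2^*_\gamma}\|u\|^s_{L^{2^*_\gamma}_Q}$, which gives $Qf(u)\in L^s$ for every $s\in[1,2^*_\gamma]$ because $\beta>N$.

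However, the paper then applies Lemma \ref{wreg}, which is stated only for $p\in[2,2^*_\gamma]$; its proof bounds $\|u_n\|_{L^p}$ through $W^{1,2}_\gamma\hookrightarrow L^p$. So the paper's proof also silently requires $q\le 1+2^*_\gamma/2$, and in that range your argument and the paper's agree in substance.

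The ingredient missing from both is an $L^1$ bound on $u$. Test the equation with $\min(u,\delta)/\delta\in E_V^\gamma$, drop the nonnegative gradient term, and let $\delta\to0$. This gives $V_0\int_{\R^N}u\le\int_{\R^N}Vu\le\int_{\R^N}Qf(u)<\infty$, where finiteness holds since $Q\in L^1$ and $u\in L^2_Q\cap L^q_Q$. Hence $u\in L^p$ for all $p\in[1,2^*_\gamma]$. After that, you still need to invoke the linear $L^p$ estimate for this $p<2$, with an identification argument that does not go through $W^{1,2}_\gamma\hookrightarrow L^p$.
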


\section{Proof of Theorem~\ref{imersao}}

We first prove an auxiliary result.

\begin{lemma}\label{embedding-ball}
    Let $1\leq p \leq 2^*_\gamma$, $R>0$, and $u\in E^\gamma_V$. Then, there exists $C_0:=C_0(p,R)>0$ such that
\begin{equation}\label{auxembb}
       \|u\|_{L^p(B_R)} \leq C_0 \|u\|_{E_V^\gamma} \ ,
    \end{equation}
    where $B_R$ denotes Euclidean ball 
$$B_R:=\left\lbrace z\in\mathbb{R}^N : |z|< R \right\rbrace .$$
\end{lemma}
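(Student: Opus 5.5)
Since $E_V^\gamma$ is the completion of $C_0^\infty(\mathbb{R}^N)$, it suffices to prove \eqref{auxembb} for $u\in C_0^\infty(\mathbb{R}^N)$ and then pass to the limit. Convergence in $E_V^\gamma$ is Cauchy in the $L^p(B_R)$ norm by the estimate itself, and it is also Cauchy in $L^2(B_R)$ because $V\ge C_V(1+R)^{-|\alpha|}$ on $B_R$; so the limits identify. The idea is to localize with a cutoff and then invoke the Kogoj--Lanconelli embedding $H_0^{1,\gamma}(\Omega)\hookrightarrow L^p(\Omega)$ on the bounded domain $\Omega=B_{2R}$, for $1\le p\le 2^*_\gamma$.

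Fix $\psi\in C_0^\infty(B_{2R})$ with $0\le\psi\le1$, $\psi\equiv1$ on $B_R$ and $|\nabla\psi|\le 2/R$. Then $\psi u\in C_0^\infty(B_{2R})\subset H_0^{1,\gamma}(B_{2R})$, and
$$
\|u\|_{L^p(B_R)}\le \|\psi u\|_{L^p(B_{2R})}\le C_{KL}\,\|\nabla_\gamma(\psi u)\|_{L^2(B_{2R})}.
$$
By the product rule,
$$
\nabla_\gamma(\psi u)=\psi\nabla_\gamma u+u\nabla_\gamma\psi, \qquad |\nabla_\gamma\psi|^2=|\nabla_x\psi|^2+|x|^{2\gamma}|\nabla_y\psi|^2\le (1+(2R)^{2\gamma})\,4/R^2 \ \text{ on } B_{2R}.
$$
Hence
$$
\|\nabla_\gamma(\psi u)\|_{L^2}\le \|\nabla_\gamma u\|_{L^2(\mathbb{R}^N)}+C(R)\|u\|_{L^2(B_{2R})}.
$$

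The remaining lower-order term is controlled by $V$. By \ref{V}, for $|z|<2R$ we have $V(z)\ge C_V(1+|z|)^{-\alpha}\ge C_V\min\{1,(1+2R)^{-\alpha}\}=:c_R>0$, whatever the sign of $\alpha$. Therefore
$$
\|u\|_{L^2(B_{2R})}^2\le c_R^{-1}\int_{\mathbb{R}^N}V u^2\,\mathrm{d}z,
$$
and combining the estimates gives $\|u\|_{L^p(B_R)}\le C_0(p,R)\|u\|_{E_V^\gamma}$.

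The only delicate point is applying the Kogoj--Lanconelli embedding with the norm \eqref{norm}, which contains no $L^2$ term. On a bounded domain this is legitimate because of the Poincaré inequality implicit in their result. If one prefers a version with the full $H^{1,\gamma}$ norm, the $L^2$ term of $\psi u$ is absorbed by the same $V$-bound, so no step presents a genuine obstacle.
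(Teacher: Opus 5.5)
Your proof is correct and follows essentially the same route as the paper. Both arguments localize with a cutoff supported in $B_{2R}$, apply the Kogoj--Lanconelli embedding $H_0^{1,\gamma}(B_{2R})\hookrightarrow L^p(B_{2R})$ to the localized function, and absorb the term $u\nabla_\gamma\psi$ using the positive lower bound of $V$ on $B_{2R}$ from assumption $(V)$. The only difference is organizational: you prove the estimate on $C_0^\infty(\mathbb{R}^N)$ and extend it by density, identifying limits through $L^2(B_R)$, whereas the paper uses the same approximating sequence to show directly that $\varphi u\in H_0^{1,\gamma}(B_{2R})$.
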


\begin{proof}
Let $u \in E_V^\gamma$. Consider $\varphi \in C_0^{\infty}\left(B_{2R}\right)$ such that 
\begin{equation}\label{def-varphi}
    0 \leqslant \varphi \leqslant 1 \ \mbox{ in}\  B_{2R}, \ \varphi \equiv 1 \ \mbox{ in  } B_{R},  \mbox{ and  } |\nabla \varphi| \leqslant C \ \mbox{ in  } B_{2R}.
\end{equation}
In particular, $|\nabla _\gamma \varphi| \leq C_1$ in $B_{2R}$. We claim that  $\varphi u \in H_0^{1, \gamma}\left(B_{2R}\right)$.  In fact, we must prove that there exists $\left(\psi_n\right)_n \subset C_0^{\infty}\left(B_{2R}\right)$ such that $$\left\|\nabla_\gamma \psi_n-\nabla_\gamma (\varphi u)\right\|_{L^2(B_{2R})} \rightarrow 0,$$
as $n\rightarrow\infty$. By definition of $E_V^\gamma$, there exists a sequence $(u_n)_n \subset C_0^{\infty}\left(\mathbb{R}^N\right)$ satisfying $\|u_n-u\|_{E_V^\gamma} \rightarrow 0$, as $n\rightarrow \infty$. Setting $\psi_n:=\varphi u_n \in C_0^\infty(B_{2R})$, we can conclude that
$$
\begin{aligned}
    \nabla_\gamma \psi_n
-\nabla_\gamma\left(\varphi  u\right) & = [\varphi \nabla_\gamma u_n + u_n \nabla_\gamma \varphi] - [\varphi \nabla_\gamma u + u \nabla_\gamma \varphi]
\\
& = \varphi\left[\nabla_\gamma u_n-\nabla_\gamma u\right]+\nabla_\gamma \varphi\left[u_n- u\right] .
\end{aligned}
$$
Accordingly, we use the conditions on $\varphi$, the above identity, and the assumption \ref{V} to obtain
\begin{align*}
\left\|\nabla_\gamma \psi_n-\nabla_\gamma\left(\varphi u\right)\right\|_{L^2(B_{2R})}^2 & \leq \left\|\nabla_\gamma\left(u_n-u\right)\right\|_{L^2(B_{2R})}^2 + \frac{C_1^2}{\inf_{B_{2R}} V(z)}\int_{B_{2R}} V(z)|u_n-u|^2\,\mathrm{d}z \\ & \leq C_2\left\|u_n-u\right\|_{E_V^\gamma}^2  \to 0 ,
\end{align*}
as $n\rightarrow\infty$. We have proved our claim.

Now, we can apply the embedding by Kogoj and Lanconelli \cite[Theorem 3.3]{Kogoj-Lan-12} reformulated in terms of $H_0^{1,\gamma}(B_{2R})$:
\begin{equation}
H_0^{1,\gamma}(B_{2R}) \hookrightarrow L^p(B_{2R}).
\end{equation}
Thus,
\begin{equation}\label{Lp-ball-u-phi}
    \int_{B_R}|u|^p \mathrm{d}z = \int_{B_{R}}|\varphi u|^p \mathrm{d}z \leq \int_{B_{2R}}|\varphi u|^p \mathrm{d}z \leq C_3 \left(\int_{B_{2R}} |\nabla_\gamma(\varphi u)|^2 \,\mathrm{d}z \right)^{\frac{p}{2}}.
\end{equation}
So,
\begin{equation}\label{est-varphi}
\begin{aligned}
    \int_{B_R}|u|^p \mathrm{d}z & \leq C_4 \left(\int_{B_{2R}} \left[u^2\left|\nabla_\gamma \varphi\right|^2+\varphi^2\left|\nabla_\gamma u\right|^2\right] \, \mathrm{d}z \right)^{\frac{p}{2}}
    \\
    &\leq C_5 \left(\int_{B_{2R}}\left[\left|\nabla_\gamma u\right|^2 + V(z) u^2\right] \mathrm{d}z \right)^{\frac{p}{2}}  \leq C_5\|u\|_{E_V^\gamma}^p.
\end{aligned}    
\end{equation}
It proves \eqref{auxembb}.
\end{proof}

\begin{proof}[\textbf{\textit{Proof of Theorem \ref{imersao}}}]
	Observe that we can write
	\begin{equation}\label{split}
		\int_{\mathbb{R}^N}Q(z)|u|^p\,\mathrm{d}z = \int_{B_1}Q(z)|u|^p\,\mathrm{d}z + \int_{\mathbb{R}^N\setminus B_1}Q(z)|u|^p\,\mathrm{d}z .
	\end{equation}
From \ref{Q} and \eqref{auxembb} (with $R=1$), it holds that
\begin{equation}\label{ime-bola-Qp}
		\int_{B_1}Q(z)|u|^p\,\mathrm{d}z \leq C_Q\int_{B_1}|u|^p\,\mathrm{d}z \leq C_1\|u\|_{E_V^\gamma}^p.
	\end{equation}

It remains to estimate the second integral  on
	the right-hand side of \eqref{split}. To do this, for each $j\in\{0\}\cup\mathbb{N}$, define the annulus 	
	$$
	A_j = \{z\in\R^N: 2^j<|z|<2^{j+1}\}.
	$$
    Notice that
\begin{equation}\label{undersum}
		\int_{\mathbb{R}^N\setminus B_1}Q(z)|u|^p\,\mathrm{d}z = \sum_{j=0}^\infty \int_{A_j}Q(z)|u|^p\,\mathrm{d}z .
	\end{equation}
From condition \ref{Q}, we can estimate
\begin{equation*}\label{QAj}
		\int_{A_j}Q(z)|u|^p\,\mathrm{d}z 
		\leq  C_Q  \int_{A_j} \frac{1}{(1+|z|)^\beta} |u|^p\,\mathrm{d}z \\
		\leq   \frac{C_Q}{2^{\beta j}} \int_{A_j}  |u|^p\,\mathrm{d}z.
	\end{equation*}
By using the change of variables $w=2^{-j}z$, we get
	\begin{equation*}
		\int_{A_j}  |u(z)|^p\,\mathrm{d}z = \int_{A_0}  |u(2^jw)|^p\ 2^{Nj}\,\mathrm{d}w = 2^{Nj}  \int_{A_0}  |u_j(w)|^p \,\mathrm{d}w,
	\end{equation*}
where $u_j(w) := u(2^jw)$.  Hence,
	\begin{equation}\label{QAjA0}
		\int_{A_j}Q(z)|u|^p\,\mathrm{d}z  \leq   \frac{C_Q}{2^{(\beta-N) j}} \int_{A_0}  |u_j(w)|^p \,\mathrm{d}w .
	\end{equation}
Applying \eqref{est-varphi} (the first inequality), with $R=2$ and using the fact that $A_0 \subset B_2$, one has
\begin{equation}\label{imer-uj}
    \int_{A_0}  |u_j(w)|^p \,\mathrm{d}w \leq C_2\left(\int_{B_{4}} \left[\left|\nabla_\gamma u_j(w)\right|^2 + u_j^2(w)\right] \, \mathrm{d}z \right)^{\frac{p}{2}}.
\end{equation}
Recalling that if $w=(x,y)$ and $z=(\widetilde{x},\widetilde{y})=(2^jx,2^jy)= 2^jw$, the identity
$$
	\nabla_\gamma u_j(w) =  \left(2^j \nabla_x u(2^j w),2^j |x|^{\gamma} \nabla_y u(2^j w) \right) 
	$$
	gives
$$
	\begin{aligned}
		\int_{\mathbb{R}^N}  |\nabla_\gamma u_j(w) |^2 \,\mathrm{d}w &= 2^{2j} \int_{\mathbb{R}^N}   \left[ |\nabla_{x} u(2^jw)|^2 + |x|^{2\gamma} |\nabla_{y} u(2^jw)|^2\right]  \,\mathrm{d}w
		\\
		&=2^{(2-N)j}\int_{\mathbb{R}^N}   \left[ |\nabla_{x} u(z)|^2 + |2^{-j}\widetilde{x}|^{2\gamma} |\nabla_{y} u(z)|^2\right]  \,\mathrm{d}z.
	\end{aligned}
	$$
So,
	$$
	\begin{aligned}
		\int_{\mathbb{R}^N}  |\nabla_\gamma u_j(w) |^2 \,\mathrm{d}w &= 2^{(2-N)j}\int_{\mathbb{R}^N}|\nabla_{x} u(z)|^2\,\mathrm{d}z + 2^{(2-N-2\gamma)j}\int_{\mathbb{R}^N}|\widetilde{x}|^{2\gamma} |\nabla_{y} u(z)|^2\,\mathrm{d}z
		\\
		&\leq 2^{(2-N)j} \int_{\mathbb{R}^N} |\nabla_{\gamma} u(z)|^2  \,\mathrm{d}z.
	\end{aligned}
	$$
    We assume for now that $\alpha\geq0$. From \ref{V}, we have
\begin{equation}\label{A0}
 \int_{B_4}u_j^2(w) \, \mathrm{d}w = 2^{-Nj}\int_{B_{4\cdot2^j}}u^2(z) \, \mathrm{d}z \leq C_32^{(\alpha-N)j}\int_{\mathbb{R}^N} V(z)u^2(z) \, \mathrm{d}z.   
\end{equation}
    The last two inequalities together with \eqref{imer-uj}, infer that
    $$
\int_{A_0}  |u_j(w)|^p \,\mathrm{d}w \leq C_4 2^{\frac{(2-N)jp}{2}}\|u\|_{E_V}^p + C_42^{\frac{(\alpha-N)jp}{2}}\|u\|_{E_V}^p.
 $$
This combined with \eqref{undersum} and \eqref{QAjA0}, ensures that
 $$
\int_{\mathbb{R}^N\setminus B_1}Q(z)|u|^p\,\mathrm{d}z \leq C_5\|u\|_{E_V}^p \sum_{j=0}^\infty2^{\zeta_1j}+ C_5\|u\|_{E_V}^p \sum_{j=0}^\infty 2^{\zeta_2j},
$$
where
\begin{equation}\label{zetas}
		\zeta_1:=\frac{(2-N)p}{2}-(\beta-N)\quad 
		\mbox{and} \quad
		\zeta_2:=\frac{(\alpha-N)p}{2}-(\beta-N).
	\end{equation}
For $\alpha<0$, we simply notice that $\alpha$ can be replaced with $0$ in the estimate \eqref{A0}. Consequently, it will be still true that $\zeta_2<0$. Recalling $(i)-(iii)$, it follows that $\zeta_1,\zeta_2<0$, and hence
$$
\sum_{j=0}^\infty2^{\zeta_1j} < \infty \quad \mbox{and} \quad \sum_{j=0}^\infty2^{\zeta_2j}<\infty.
$$
    As a consequence,
$$
\int_{\mathbb{R}^N\setminus B_1}Q(z)|u|^p\,\mathrm{d}z \leq C_6\|u\|_{E_V^{\gamma}}^p.
$$
 Combining the last estimate with \eqref{split} and \eqref{ime-bola-Qp}, we get
	$$\int_{\mathbb{R}^N}Q(z)|u|^p\,\mathrm{d}z \leq (C_1+C_6)\|u\|_{E_V^\gamma}^p.
	$$
	Thereby, the embedding $E_V^\gamma \hookrightarrow L^p_Q(\mathbb{R}^N)$ is continuous.

To prove compactness, let $(u_n)_n \subset E_V^\gamma$ be such that $u_n \rightharpoonup 0$ in $E_V^\gamma$. We saw in the proof of Lemma \ref{embedding-ball} that $\varphi u_n  \in H^{1,\gamma}_0(B_{2R})$, where $\varphi$ is defined in \eqref{def-varphi}. In this case, it holds that $\varphi u_n  \rightharpoonup 0$ in $H^{1,\gamma}_0(B_{2R})$.  Since the embedding $H_0^{1,\gamma}(B_{2R}) \hookrightarrow L^p(B_{2R})$ is compact, for $1 \leq p <2_\gamma^*$ (see \cite[Theorem 3.3]{Kogoj-Lan-12}), we can use \eqref{Lp-ball-u-phi} to get
$$
\int_{B_R}|u_n|^p \mathrm{d}z \leq \int_{B_{2R}}|\varphi u_n|^p \mathrm{d}z = o_n(1).
$$
Here \(o_n(1) \to 0\), as \(n\to \infty\). Hence, for \(R>0\) arbitrary, it follows that
\begin{equation}\label{compact}
\int_{\mathbb{R}^N}Q(z)|u_n|^p \, \mathrm{d}z \leq C_Q\, o_n(1) + \int_{\mathbb{R}^N \setminus B_R}Q(z)|u_n|^p \, \mathrm{d}z,
\end{equation}
where we used \ref{Q}. For simplicity, we will show the item $(i')$. As $p<2_\gamma^*$, we can pick $q_1>1$ such that $2 \leq q_1 p <2_\gamma^*$. Thus, the Hölder's inequality, yields 
$$
\int_{\mathbb{R}^N \setminus B_R}Q(z)|u_n|^p \, \mathrm{d}z \leq \left(\int_{\mathbb{R}^N \setminus B_R} Q(z) \, \mathrm{d}z\right)^{\frac{1}{q_2}}\left(\int_{\mathbb{R}^N \setminus B_R} Q(z)|u_n|^{pq_1} \, \mathrm{d}z\right)^{\frac{1}{q_1}},
$$
where $\frac{1}{q_1}+\frac{1}{q_2}=1$. For $\beta>N$, we have $Q(z) \leq  C_Q(1+|z|)^{-\beta} \in L^1(\mathbb{R}^N)$. Since $(u_n)_n$ is bounded in $E_V^\gamma$, given $\varepsilon>0$, we can take $R>0$ large such that 
$$
\int_{\mathbb{R}^N \setminus B_R}Q(z)|u_n|^p \, \mathrm{d}z \leq \varepsilon.
$$
This applied in \eqref{compact} completes the proof.
\end{proof}

\section{Proof of Theorem~\ref{existencia_de_solucao}}

We split the proof of Theorem~\ref{existencia_de_solucao} into the study of the variational structure of Problem \eqref{P}, the proof of the existence of solutions, and finally the proof of the regularity results.

\subsection{Variational structure}

We introduce the energy functional $I:E_V^\gamma \to \mathbb{R}$ associated to Problem \eqref{P}  given by
$$
I(u) = \frac{1}{2}\|u\|_{E_V^\gamma}^2 - \int_{\R^N}Q(z)F(u) \, \mathrm{d} z.
$$
To show that $I$ is well defined, see that by \ref{(f_1)}-\ref{(f_2)}, for given $\varepsilon>0$, there exists $C=C(\varepsilon,q)>0$ such that
\begin{equation}\label{fcond}
	|f(s)| \leq \varepsilon|s| + C|s|^{q-1}	\quad \mbox{and} \quad |F(s)| \leq \varepsilon s^2 + C|s|^{q}, \quad \forall \, s \in \mathbb{R}.
\end{equation}
This inequality together with Theorem~\ref{imersao} infer that
\begin{equation}\label{des-F-integral}
	\int_{\R^N} Q(z)F(u) \mathrm{d}z \leq \varepsilon\int_{\R^N}Q(z)u^2\, \mathrm{d}z  + C\int_{\R^N}Q(z)|u|^q \,\mathrm{d}z<\infty.
\end{equation}
Moreover, by standard arguments one may check that $I \in C^1(E_V^\gamma,\mathbb{R})$, with
\begin{equation}\label{derivada-Gateaux}
	I'(u)\varphi = \int_{\mathbb{R}^N}\left(\nabla_\gamma u \cdot \nabla_\gamma \varphi + V(z)u\varphi\right)\,\mathrm{d}z  - \int_{\mathbb{R}^N}Q(z)f(u)\varphi\,\mathrm{d}z, \quad \forall \, u,\varphi \in E_V^\gamma.
\end{equation}
Consequently, critical points of $I$ are weak solutions for the problem \eqref{P}.

Next, we will show that $I$ satisfies the mountain pass geometry.

\begin{lemma}\label{MPGeom}
	Let the assumptions of Theorem \ref{existencia_de_solucao} hold. Then,
	\begin{enumerate}
		\item[$(a)$] there exist positive numbers $\rho$ and $\tau$ such that $I(u) \geq \tau$, whenever $\|u\|_{E_V^\gamma}=\rho$; 
		\item[$(b)$] there exists $e \in E_V^\gamma$ such that $\|e\|_{E_V^\gamma}>\rho$ and $I(e)<0$.
	\end{enumerate}
\end{lemma}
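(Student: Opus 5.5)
For part $(a)$ I will combine the growth bound \eqref{fcond} with the continuous embedding of Theorem~\ref{imersao}. Under $(i')$ or $(ii')$, both $2$ and $q$ lie in the admissible range: $q<2^*_\gamma$ in case $(i')$. In case $(ii')$ I would use that the assumption $q<\overline{p}$ is implicitly needed, and I would state this explicitly. This gives constants $S_2,S_q>0$ with $\|u\|_{L^2_Q}\le S_2\|u\|_{E_V^\gamma}$ and $\|u\|_{L^q_Q}\le S_q\|u\|_{E_V^\gamma}$. Then, by \eqref{des-F-integral},
\[
I(u)\ \ge\ \frac12\|u\|_{E_V^\gamma}^2-\varepsilon S_2^2\|u\|_{E_V^\gamma}^2-C S_q^q\|u\|_{E_V^\gamma}^q .
\]
Choosing $\varepsilon=1/(4S_2^2)$ gives $I(u)\ge \|u\|^2_{E_V^\gamma}\bigl(\tfrac14-CS_q^q\|u\|^{q-2}_{E_V^\gamma}\bigr)$. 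Since $q>2$, a small $\rho>0$ makes the bracket at least $1/8$, and we take $\tau=\rho^2/8$.

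For part $(b)$ I will use the Ambrosetti--Rabinowitz condition \ref{(f_3)}. Integrating $\theta/s\le f(s)/F(s)$ on $[1,s]$ for $s>0$, and analogously on $[s,-1]$ for $s<0$, gives $F(s)\ge c_0|s|^\theta$ for $|s|\ge1$, with $c_0=\min\{F(1),F(-1)\}>0$; here $F(\pm1)>0$ by \ref{(f_3)}. Fix a nonnegative $\varphi\in C_0^\infty(\R^N)\setminus\{0\}$ with, say, $\varphi\ge 1$ on a ball $B\subset\operatorname{supp}\varphi$. Since $F\ge0$ everywhere (again by \ref{(f_3)}) and $Q>0$ a.e.,
\[
\int_{\R^N}Q(z)F(t\varphi)\,\mathrm dz\ \ge\ \int_{B}Q(z)F(t\varphi)\,\mathrm dz\ \ge\ c_0t^\theta\int_B Q(z)\varphi^\theta\,\mathrm dz \quad (t\ge1),
\]
and the last integral is positive because $Q>0$ a.e. Hence $I(t\varphi)\le \frac{t^2}{2}\|\varphi\|^2_{E_V^\gamma}-c_1t^\theta\to-\infty$ as $t\to\infty$, since $\theta>2$. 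Taking $e=t_0\varphi$ with $t_0$ large gives $\|e\|_{E_V^\gamma}>\rho$ and $I(e)<0$. For this step I also need $\varphi\in E_V^\gamma$ with finite norm, i.e.\ $V\in L^1_{\rm loc}$. I would note this as implicit in the definition of $E_V^\gamma$ as a completion of $C_0^\infty$, under which the norm must be finite on test functions.

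The main obstacle I expect is purely bookkeeping: making sure the exponents $2$ and $q$ are simultaneously admissible in Theorem~\ref{imersao}. Case $(ii')$ requires $q<\overline p$, which I will add as a standing remark. Beyond that, the only delicate point is using $Q>0$ a.e.\ to keep the lower bound in $(b)$ strictly positive.
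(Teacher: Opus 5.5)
Your proof is correct and follows essentially the paper's route. In $(a)$ you combine \eqref{fcond} with the continuous embeddings into $L^2_Q$ and $L^q_Q$ as the paper does, and you keep the embedding constant separate from the $\varepsilon$-dependent constant $C$, which the paper lumps into one $C_1$. In $(b)$, using $F\ge 0$ together with $F(s)\ge c_0|s|^\theta$ for $|s|\ge 1$ is just a variant of the paper's global bound $F(s)\ge C_2|s|^\theta-C_3$. Your side remarks are accurate and are used silently by the paper: case $(ii')$ tacitly needs $q<\overline{p}$, and $V\in L^1_{\mathrm{loc}}$ is implicit in the definition of $E_V^\gamma$.
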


\begin{proof}
	From \eqref{des-F-integral} and Theorem \ref{imersao}, one deduces
    $$
	I(u) \geq\left(\frac{1}{2} - \varepsilon C_1\right)\|u\|_{E_V^\gamma}^2  -C_1\|u\|_{E_V^\gamma}^q = \|u\|_{E_V^\gamma}^2\left[\left(\frac{1}{2} - \varepsilon C_1\right) - C_1\|u\|_{E_V^\gamma}^{q-2}\right]. 
	$$
Choosing $0<\varepsilon<\frac{1}{2C_1}$ and recalling that $q>2$, one can easily use the above expression to obtain
	item $(a)$.
    
Let us prove $(b)$. From \ref{(f_3)}, there exist positive constants, $C_2, C_3$ such that
	\begin{equation*}\label{F}
		F(s) \geq C_2|s|^\theta-C_3 , \quad \forall \, s \in \mathbb{R}.
	\end{equation*}
    Now, let $\varphi \in C_0^\infty(\R^N)\setminus\{0\}$. By the last inequality, we have
	$$
	I(t \varphi) \leq\frac{t^2}{2}\|\varphi\|_{E_V^\gamma}^2- C_2t^\theta \int_{\mathrm{supp} \, \varphi} Q(z) |\varphi|^\theta \, \mathrm{d}z + C_3 \int_{\mathrm{supp} \, \varphi} Q(z) \, \mathrm{d}z .
	$$
Since $\theta>2$, the last estimate implies that $I(t\varphi)\rightarrow-\infty$, as $t\rightarrow\infty$. So, there exists $t_0>0$ such that $\|e\|_{E_V^\gamma}>\rho$ and $I(e)<0$, for $e = t_0\varphi$, with $t_0>0$ large, completing the proof.
\end{proof}
    
It follows from Lemma~\ref{MPGeom} that the minimax level
$$
c_{MP}:=\inf_{g\in \Gamma}\max_{0\leq t \leq 1}I(g(t)) \geq \tau>0,
$$
where $\Gamma:=\{g \in C\left([0,1], E_V^\gamma \right): g(0) = 0 \ \ \mbox{and} \ \ I(g(1))<0 \}$, is well defined.

\vspace{0,5cm}

\subsection{Weak nonnegative solution for \eqref{P}}

We say that $(u_n)_n \subset E_V^\gamma$ is a $(PS)_c$ sequence, when
\begin{equation}\label{sc}
	I(u_n) \rightarrow c\ \ \mbox{and} \ \ \|I'(u_n)\|_{(E^\gamma_V)'}\rightarrow0,\ \ \mbox{as}\ \ n\rightarrow\infty,
\end{equation}
where $(E_V^\gamma)'$ denotes the dual
space of $E_V^\gamma$. Furthermore, we say that $I$ satisfies the $(PS)_c$ condition if any $(PS)_c$ sequence admits a convergent subsequence.

Next, we will show that $I$ satisfies the $(PS)_c$ condition, for any $c \in \mathbb{R}$.

\begin{lemma}\label{PS}
	Let the assumptions of Theorem \ref{existencia_de_solucao} hold. If $(u_n)_n \subset E_V^\gamma$ is a $(PS)_c$ sequence, then $(u_n)_n$ has a convergent subsequence in $E_V^\gamma$.
\end{lemma}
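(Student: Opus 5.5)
The argument follows the standard two-step scheme: first show that $(u_n)_n$ is bounded using the Ambrosetti--Rabinowitz condition \ref{(f_3)}, then upgrade weak convergence to strong convergence using the compact embedding of Theorem \ref{imersao} under $(i')$ or $(ii')$.

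\emph{Boundedness.} Since $I(u_n)\to c$ and $\|I'(u_n)\|\to 0$, we have
$$
c+1+\|u_n\|_{E_V^\gamma}\geq I(u_n)-\frac{1}{\theta}I'(u_n)u_n
$$
for $n$ large. The right-hand side equals
$$
\left(\frac12-\frac1\theta\right)\|u_n\|_{E_V^\gamma}^2+\int_{\R^N}Q(z)\left[\frac1\theta f(u_n)u_n-F(u_n)\right]\mathrm{d}z .
$$
By \ref{(f_3)} and $Q>0$ the integral is nonnegative. Because $\theta>2$, the leading coefficient is positive, so $(u_n)_n$ is bounded in $E_V^\gamma$.

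\emph{Compactness.} Since $E_V^\gamma$ is a Hilbert space, we may pass to a subsequence with $u_n\rightharpoonup u$ in $E_V^\gamma$. By the compactness part of Theorem \ref{imersao}, $u_n\to u$ in $L^2_Q(\R^N)$ and in $L^q_Q(\R^N)$. This uses that $2<q<2^*_\gamma$; in case $(ii')$ we need $q<\overline p$, which I would assume is part of the standing hypotheses, or else replace by $L^r_Q$ with admissible $r$. Next write
$$
\|u_n-u\|_{E_V^\gamma}^2=I'(u_n)(u_n-u)-I'(u)(u_n-u)+\int_{\R^N}Q(z)\big(f(u_n)-f(u)\big)(u_n-u)\,\mathrm{d}z .
$$
The first term tends to $0$ because $\|I'(u_n)\|\to 0$ and $(u_n-u)$ is bounded. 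The second tends to $0$ by weak convergence, since $I'(u)$ is a continuous linear functional.

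\emph{The nonlinear term.} For the last integral, apply \eqref{fcond} and Hölder's inequality with weight $Q$:
$$
\int_{\R^N} Q|f(u_n)||u_n-u|\,\mathrm{d}z\le \varepsilon\|u_n\|_{L^2_Q}\|u_n-u\|_{L^2_Q}+C\|u_n\|_{L^q_Q}^{q-1}\|u_n-u\|_{L^q_Q}.
$$
The same bound holds with $u$ in place of $u_n$. Here $\|u_n\|_{L^2_Q}$ and $\|u_n\|_{L^q_Q}$ are bounded by the continuous embedding, while $\|u_n-u\|_{L^2_Q}$ and $\|u_n-u\|_{L^q_Q}$ tend to $0$ by the compact embedding. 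Hence the integral tends to $0$, and $u_n\to u$ in $E_V^\gamma$.

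The main obstacle is this step of controlling the nonlinear term. It is exactly where the compactness of $E_V^\gamma\hookrightarrow L^p_Q$ is needed for both exponents $p=2$ and $p=q$. One must check that these exponents lie in the compact range: in case $(ii')$ this requires $q<\overline{p}$ (and $2<\overline p$ follows from $\alpha<\beta$ when $N<\alpha$). Without compactness the loss of mass at infinity could not be excluded.
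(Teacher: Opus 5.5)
Your proof is correct and follows the paper's argument: boundedness from \ref{(f_3)} via $I(u_n)-\frac{1}{\theta}I'(u_n)u_n$, then strong convergence by controlling $\int_{\R^N}Q(z)f(u_n)(u_n-u)\,\mathrm{d}z$ through \eqref{fcond}, H\"older's inequality and the compact embedding of Theorem~\ref{imersao}. The differences are cosmetic: you subtract $I'(u)(u_n-u)$ rather than using $\langle u_n,u\rangle_{E_V^\gamma}\to\|u\|_{E_V^\gamma}^2$, and you fix the H\"older pair $(\frac{q}{q-1},q)$. Your caveat that case $(ii')$ tacitly requires $q<\overline{p}$ is accurate, since the paper only carries out case $(i')$, where $q<2^*_\gamma$ suffices.
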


\begin{proof}
	See that from \eqref{derivada-Gateaux}, \eqref{sc},  and \ref{(f_3)}, there holds
$$
	\begin{aligned}
		C_1+C_2\left\|u_n\right\|_{E_V^\gamma} &\geq I\left(u_n\right)-\frac{1}{\theta} I^{\prime}\left(u_n\right)u_n 
		\\
		&= \left(\frac{1}{2}-\frac{1}{\theta}\right)\left\|u_n\right\|_{E_V^\gamma}^2 - \int_{\mathbb{R}^N} Q(z)\left(F\left(u_n\right)-\frac{1}{\theta} f\left(u_n\right) u_n\right)
		\\
		& \geq \left(\frac{1}{2}-\frac{1}{\theta}\right)\left\|u_n\right\|_{E_V^\gamma}^2.
	\end{aligned}
	$$
Consequently, $(u_n)_n$ is bounded in $E_V^\gamma$. Thus, up to a subsequence, $u_n \rightharpoonup u$ weakly in $E_V^\gamma$.

Next, we will show that $\|u_n-u\|_{E_V^\gamma} \to 0$, as $n\to \infty$. To do this, firstly, we recall that \eqref{sc} infers that
	\begin{equation}\label{derivada-conv-zero-un-u}
		I'(u_n)(u_n-u) \rightarrow0,\quad \mbox{as}\ n\rightarrow\infty.
	\end{equation}
On the other hand,
	\begin{equation}\label{derivada-conv-zero}
		I'(u_n)(u_n-u) = \left\|u_n\right\|_{E_V^\gamma}^2-\left\langle u_n, u\right\rangle_{E_V^\gamma} - \int_{\mathbb{R}^N} Q(z) f(u_n)\left(u_n-u\right)\,\mathrm{d}z.
	\end{equation}
We claim that
	\begin{equation}\label{convergencia-funcao-f}
		\lim_{n \to \infty} \int_{\mathbb{R}^N} Q(z) f(u_n)\left(u_n-u\right)\,\mathrm{d}z  = 0.
	\end{equation}
In fact, by \eqref{fcond} and H\"older's inequality, we deduce that
	$$
	\begin{aligned}
		\int_{\mathbb{R}^N} Q(z) f(u_n)\left(u_n-u\right)\,\mathrm{d}z &\leq \varepsilon\|u_n\|_{L_Q^2(\mathbb{R}^N)} \|u_n - u\|_{L_Q^2(\mathbb{R}^N)} 
		\\
		&\quad+ C \|u_n\|_{L_Q^{r_1(q-1)}(\mathbb{R}^N)}^{q-1} \|u_n - u\|_{L_Q^{r_2}(\mathbb{R}^N)}, 
	\end{aligned}
	$$
	with $r_1,r_2>1$ and $\frac{1}{r_1} + \frac{1}{r_2}=1$. Noting that we can choose $r_1,r_2$ satisfying $r_1(q-1),r_2 \in (2,2_\gamma^*)$, by Theorem~\ref{imersao}, we have that \eqref{convergencia-funcao-f} holds. Combining \eqref{derivada-conv-zero-un-u}-\eqref{convergencia-funcao-f}, one has 
	$$
	\|u_n-u\|_{E_V^\gamma}^2 = \left\|u_n\right\|_{E_V^\gamma}^2-2\left\langle u_n, u\right\rangle_{E_V^\gamma} + \left\|u\right\|_{E_V^\gamma}^2 = o_n(1),
	$$
	where we used that $\left\langle u_n, u\right\rangle_{E_V^\gamma} = \|u\|_{E_V^\gamma}^2+o_n(1)$. The lemma is proved.	
\end{proof}

    We conclude this subsection with the following existence result.

    \begin{proposition}\label{existencia-solucao}
	Let the assumptions of Theorem \ref{existencia_de_solucao} hold. Then problem \eqref{P} has at
	least one nonnegative nontrivial weak solution.
\end{proposition}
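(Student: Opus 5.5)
The plan is to apply the Mountain Pass Theorem to a truncated functional. Define $f^+(s):=f(s)$ for $s\geq 0$ and $f^+(s):=0$ for $s<0$, with $F^+(t)=\int_0^t f^+(s)\,\mathrm{d}s$. Set
$$
I_+(u)=\frac12\|u\|_{E_V^\gamma}^2-\int_{\R^N}Q(z)F^+(u)\,\mathrm{d}z .
$$
The function $f^+$ is continuous, since $f(0)=0$ by \ref{(f_1)}. It satisfies \eqref{fcond}, so $I_+\in C^1(E_V^\gamma,\R)$ and its derivative has the same form as \eqref{derivada-Gateaux} with $f$ replaced by $f^+$.

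We then check the hypotheses of the Mountain Pass Theorem for $I_+$.
\begin{enumerate}
\item[(a)] The geometry part $(a)$ of Lemma \ref{MPGeom} carries over verbatim, because it only uses \eqref{fcond} and Theorem \ref{imersao}.
\item[(b)] For part $(b)$, we choose $\varphi\in C_0^\infty(\R^N)$ with $\varphi\geq0$ and $\varphi\not\equiv0$. Then $F^+(t\varphi)=F(t\varphi)$, so the estimate $F(s)\ge C_2|s|^\theta-C_3$ for $s\geq0$ gives $I_+(t\varphi)\to-\infty$.
\item[(c)] For the $(PS)_c$ condition we redo Lemma \ref{PS}. The Ambrosetti--Rabinowitz inequality $\theta F^+(s)\le f^+(s)s$ holds for all $s$: it is trivial (both sides zero) for $s\le0$ and is \ref{(f_3)} for $s>0$. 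The compactness step only uses the growth bounds together with the compact embedding of Theorem \ref{imersao} under $(i')$ or $(ii')$.
\end{enumerate}
Hence the Mountain Pass Theorem yields a critical point $u$ of $I_+$ with $I_+(u)=c_{MP}\geq\tau>0$. Since $I_+(0)=0$, we get $u\not\equiv0$.

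Next we show $u\geq0$ by testing $I_+'(u)$ with $u^-=\min\{u,0\}$. The main obstacle is justifying that $u^-\in E_V^\gamma$ and that $\nabla_\gamma u\cdot\nabla_\gamma u^-=|\nabla_\gamma u^-|^2$ a.e. For smooth approximants, truncation of $C_0^\infty$ functions gives Lipschitz compactly supported functions. Since $\nabla_\gamma$ is a combination of the vector fields $\partial_{x_i}$ and $|x|^\gamma\partial_{y_j}$, the chain rule $\nabla_\gamma u^-=\chi_{\{u<0\}}\nabla_\gamma u$ holds a.e. I would pass to the limit by approximating $u$ in $E_V^\gamma$ by $u_n\in C_0^\infty$, taking the truncations $u_n^-$, which are bounded in $E_V^\gamma$, and extracting a weakly convergent subsequence. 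Alternatively, I would apply a smooth convex approximation of $s\mapsto s^-$.

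Granting this, we have $f^+(u)u^-=0$, so
$$
0=I_+'(u)u^-=\|u^-\|_{E_V^\gamma}^2 .
$$
Therefore $u^-=0$, that is, $u\ge0$. Then $f^+(u)=f(u)$, so $I'(u)=I_+'(u)=0$, and $u$ is a nonnegative nontrivial weak solution of \eqref{P}.
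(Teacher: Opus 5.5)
Your proposal is correct and follows the same route as the paper. You replace $f$ by its truncation vanishing on $(-\infty,0]$, obtain a nontrivial mountain-pass critical point via Lemmas~\ref{MPGeom} and~\ref{PS}, and test with the negative part of $u$ to get $u\geq 0$. Your extra care fills in points the paper covers only with ``without loss of generality'': you take $\varphi\geq 0$ in the geometry step (the bound $F(s)\geq C_2|s|^\theta-C_3$ fails for the truncated $F$ at large negative $s$), you use only the non-strict Ambrosetti--Rabinowitz inequality, and you justify $u^-\in E_V^\gamma$ by approximation.
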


\begin{proof}
	Observe that without loss of generality, we can assume that $f(s)=0$ for $s\leq0$, and the above results are also valid for this modified nonlinearity, again denoted by $f$. Applying Lemmas~\ref{MPGeom} and ~\ref{PS}, we obtain $u \in E_V^\gamma$ such that $I(u)=c_{MP}>0$ and $I'(u)=0$, that is, $u\neq0$ is a weak solution of \eqref{P}. Moreover, defining $u^-(z):=\max\{-u(z),0\}$, then
	$$
	0 = I'(u)u^- = -\|u^-\|_{E_V^\gamma}^2,
	$$
	and so, $u^- =0$ a.e. in $\mathbb{R}^N$. Therefore, $u \geq 0$ a.e. in $\mathbb{R}^N$, finalizing the proof.
\end{proof}

\vspace{0,5cm}

\subsection{Regularity}

Next, we apply Moser’s iteration to get that any weak solution of \eqref{P} belongs to $L^\infty_{\mathrm{loc}}(\mathbb{R}^N)$.

    \begin{proposition}
	Let the assumptions of Theorem \ref{existencia_de_solucao} hold. If $\dfrac{1}{Q}\in L^\infty_{\mathrm{loc}}(\R^N)$, then the solution obtained in Proposition~\ref{existencia-solucao} belongs to $L^\infty_{\mathrm{loc}}(\mathbb{R}^N)$.
\end{proposition}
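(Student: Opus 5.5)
We run a Moser iteration on a fixed ball; the Kogoj--Lanconelli embedding $H_0^{1,\gamma}(B_{2R})\hookrightarrow L^{2^*_\gamma}(B_{2R})$ plays the role of the Sobolev inequality. Fix $R>0$ and a cutoff $\eta\in C_0^\infty(B_{2R})$ with $0\le\eta\le 1$, $\eta\equiv1$ on $B_R$ and $|\nabla_\gamma\eta|\le C/R$. Since $u\ge0$, for $L>0$ and $s\ge 0$ set $u_L:=\min\{u,L\}$ and use the test function
\[
\varphi:=\eta^2 u\,u_L^{2s}.
\]
This is an admissible element of $E_V^\gamma$ with compact support: it is a bounded-Lipschitz composition of $u$ multiplied by $\eta^2$, and the argument of Lemma~\ref{embedding-ball} shows such products lie in $H_0^{1,\gamma}(B_{2R})$. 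By density, the weak formulation extends to $\varphi$.

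\emph{Energy inequality.} Put $w_L:=\eta\, u\, u_L^{s}$. The standard computation, using $\nabla_\gamma$ exactly like $\nabla$ (product and chain rules hold because $\nabla_\gamma=(\nabla_x,|x|^\gamma\nabla_y)$ is a first-order operator), together with Young's inequality on the cross terms containing $\nabla_\gamma\eta$, gives
\[
\int |\nabla_\gamma w_L|^2 \le C(1+s)^2\Big[\int |\nabla_\gamma\eta|^2u^2u_L^{2s} + \int Q f(u)\,u\,\eta^2u_L^{2s}\Big].
\]
The term $\int V\eta^2u^2u_L^{2s}\ge0$ is discarded, which is allowed since $V>0$ by \ref{V}. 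By \eqref{fcond} with $\varepsilon=1$,
\[
Qf(u)u\le C_Q(u^2+u^{q})=C_Q(1+u^{q-2})u^2,
\]
where $Q\le C_Q$ on bounded sets by \ref{Q}. So the right-hand side is controlled by $\int (C+a(z))\,\eta^2u^2u_L^{2s}$ with $a:=u^{q-2}$.

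\emph{Main obstacle: the term with $a$.} Because $u\in L^{2^*_\gamma}(B_{2R})$ by Lemma~\ref{embedding-ball}, we have $a\in L^{2^*_\gamma/(q-2)}(B_{2R})$, and $2^*_\gamma/(q-2)>N_\gamma/2$ exactly because $q<2^*_\gamma$. This is the Brezis--Kato situation. I would first treat it by splitting $a=a\chi_{\{a\le M\}}+a\chi_{\{a>M\}}$ and applying Hölder with the Kogoj--Lanconelli embedding:
\[
\int a\chi_{\{a>M\}}w_L^2\le \|a\chi_{\{a>M\}}\|_{L^{N_\gamma/2}(B_{2R})}\,C_S\|\nabla_\gamma w_L\|_2^2 .
\]
Choosing $M$ large (depending on $s$) absorbs this into the left-hand side. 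Sobolev then yields $\|w_L\|_{2^*_\gamma}^2\le C(s,R,M)\int_{B_{2R}} u^{2s+2}$. Letting $L\to\infty$ gives $u\in L^{(s+1)2^*_\gamma}(B_R)$ whenever $u\in L^{2s+2}(B_{2R})$. Starting from $2s+2=2^*_\gamma$ and iterating finitely many times, over shrinking radii, puts $u\in L^r_{\mathrm{loc}}$ for every $r<\infty$. The local structure of $\Delta_\gamma$ does not obstruct any of this, because the embedding constant is fixed on $B_{2R}$; balls centred at arbitrary points are handled by translating the cutoff, with the embedding applied on a large ball containing them.

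\emph{Final iteration.} Once $a\in L^{t}_{\mathrm{loc}}$ with $t$ large, the classical Moser iteration runs with explicit constants. Take exponents $\chi^k$ with $\chi=2^*_\gamma/2$ and radii $R_k=R(1+2^{-k})$. Hölder on the $a$-term then gives
\[
\|u\|_{L^{2\chi^{k+1}}(B_{R_{k+1}})}\le (C\chi^{k})^{1/\chi^{k}}\|u\|_{L^{2\chi^k}(B_{R_k})}.
\]
Since $\sum k\chi^{-k}<\infty$, this yields $\|u\|_{L^\infty(B_R)}<\infty$. As for the hypothesis $1/Q\in L^\infty_{\mathrm{loc}}$: in the plan above only the upper bound on $Q$ is used. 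If needed, I would use it to convert integrals against $Q\,dz$ (from Theorem~\ref{imersao}) into unweighted local integrals when initiating the iteration.
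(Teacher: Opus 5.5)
Your argument is correct, but it takes a different route from the paper.

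\textbf{The paper's route.} The paper runs a \emph{global} Moser iteration in the weighted spaces $L^p_Q(\R^N)$. It tests with truncated powers of $u$ (essentially your $u\,u_L^{2s}$, but without a cutoff). Its Sobolev inequality is the critical weighted embedding $E_V^\gamma\hookrightarrow L^{2^*_\gamma}_Q(\R^N)$ from Theorem~\ref{imersao}. It controls the linear part of $f$ through $Q\in L^1(\R^N)$, which follows from $\beta>N$. It handles the superlinear part by H\"older against $\|u\|_{L_Q^{q_2(q-2)}}$, with $q_2<2^*_\gamma/(q-2)$ chosen so that $q_1=q_2'<2^*_\gamma/2$. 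This yields $\|u\|_{L_Q^{2^*_\gamma\sigma^j}}\le C_3\|u\|_{L_Q^{2^*_\gamma}}$ uniformly in $j$. The hypothesis $1/Q\in L^\infty_{\mathrm{loc}}$ is used only at the very end, to turn weighted norms into unweighted ones on $B_R$.

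\textbf{Your route and what each buys.} You localize instead, with cutoffs and the unweighted Kogoj--Lanconelli inequality on balls (via Lemma~\ref{embedding-ball}). So you only need $V\ge 0$ and the local upper bound on $Q$. This buys something real:
\begin{itemize}
\item it shows that $1/Q\in L^\infty_{\mathrm{loc}}$ is superfluous;
\item it does not depend on the critical weighted embedding, which Theorem~\ref{imersao} supplies in case $(i)$ but not necessarily under $(ii')$, where $\overline{p}$ may be smaller than $2^*_\gamma$.
\end{itemize}
The paper's route, in turn, avoids cutoff bookkeeping and is global. Since $Q\,\mathrm{d}z$ is a finite measure with the same null sets as Lebesgue measure, letting $j\to\infty$ in its uniform bound in fact gives $u\in L^\infty(\R^N)$.

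\textbf{Two small points in your write-up.}
\begin{itemize}
\item The Brezis--Kato step is not needed. Already $a=u^{q-2}\in L^t(B_{2R})$ with $t=2^*_\gamma/(q-2)>N_\gamma/2$ from the start; this is the same threshold the paper's choice of $q_2$ exploits.
\item With $t<\infty$, H\"older alone on the $a$-term gives step ratio $2^*_\gamma/(2t')$ rather than $\chi=2^*_\gamma/2$, unless you add the usual interpolation inequality $\|w\|_{2t'}\le\epsilon\|w\|_{2^*_\gamma}+C\epsilon^{-\mu}\|w\|_2$.
\end{itemize}
Either way the iteration closes.
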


\begin{proof}
	For each $n \in \mathbb{N}$ and $\xi>1$, we consider the sets $A_n:=\left\lbrace z \in \mathbb{R}^N : u^{\xi-1}(z) \leq n\right\rbrace $ and $D_n:= \mathbb{R}^N \setminus A_n$. 
Let
	$$
	w_n:=\left\{
	\begin{aligned}
		& u^{2\xi-1} ,\ &\mbox{in} \ A_n, \\
		&n^2u, \ &\mbox{in} \ D_n.
	\end{aligned}
	\right.
	$$
We claim that \(w_n \in E_V^\gamma\). In fact, see that
	$$
	\nabla_{\gamma}w_n=\left\{
	\begin{aligned}
		& (2\xi-1)u^{2(\xi-1)}\nabla_{\gamma}u ,\ &\mbox{in} \ A_n, \\
		&n^2\nabla_{\gamma}u, \ &\mbox{in} \ D_n.
	\end{aligned}
	\right.
	$$
    This implies
	$$
	\int_{\R^N}|\nabla_{\gamma} w_n|^2\,\mathrm{d}z \leq (2\xi-1)^2n^4\int_{A_n}|\nabla_\gamma u|^2\,\mathrm{d}z + n^4\int_{D_n}|\nabla_\gamma u|^2\,\mathrm{d}z<\infty
	$$
	and
\[
\begin{aligned}
    \int_{\R^N}V(z)w_n^2\,\mathrm{d}z &= \int_{A_n}V(z)u^{4(\xi-1)}u^2\,\mathrm{d}z + n^4\int_{D_n}V(z)u^2\,\mathrm{d}z 
    \\
    &\leq n^4\int_{\R^N}V(z)u^2\,\mathrm{d}z<\infty.
\end{aligned}
\]
Consequently, $w_n \in E_V^\gamma$ and the claim is proved.

On other hand, since $u$ is solution of \eqref{P}, then
\begin{equation}\label{solucao}
		\int_{\mathbb{R}^N}\nabla_\gamma u \cdot\nabla_\gamma w_n\,\mathrm{d}z + \int_{\mathbb{R}^N}V(z)uw_n\,\mathrm{d}z 		= \int_{\R^N}Q(z)f(u)w_n\,\mathrm{d}z 	.
	\end{equation}
    Now, consider
	$$
	v_n:=\left\{
	\begin{aligned}
		& u^{\xi} ,\ &\mbox{in} \ A_n, \\
		&nu, \ &\mbox{in} \ D_n.
	\end{aligned}
	\right.
	$$
	Hence, 
	$$
	\nabla_\gamma v_n:=\left\{
	\begin{aligned}
		& \xi u^{\xi-1}\nabla_\gamma u ,\ &\mbox{in} \ A_n, \\
		&n\nabla_\gamma u, \ &\mbox{in} \ D_n.
	\end{aligned}
	\right.
	$$
	Similarly, $v_n \in E_V^\gamma$. Notice that
	\begin{equation}\label{derivada-fraca-wn}
		\int_{\mathbb{R}^N}\nabla_\gamma u \cdot \nabla_\gamma w_n\,\mathrm{d}z = (2\xi-1)\int_{A_n}u^{2(\xi-1)}|\nabla_{\gamma} u|^2\,\mathrm{d}z + n^2\int_{D_n}|\nabla_\gamma u|^2\,\mathrm{d}z
	\end{equation}
	and
    $$
	\int_{\mathbb{R}^N}|\nabla_{\gamma} v_n|^2\,\mathrm{d}z = \xi^2\int_{A_n}u^{2(\xi-1)}|\nabla_{\gamma} u|^2\,\mathrm{d}z + n^2\int_{D_n}|\nabla_\gamma u|^2\,\mathrm{d}z.
	$$
So,
	$$
	\int_{\mathbb{R}^N}|\nabla_{\gamma} v_n|^2\,\mathrm{d}z - \int_{\mathbb{R}^N}\nabla_\gamma u \cdot \nabla_\gamma w_n\,\mathrm{d}z = (\xi^2-2\xi+1)\int_{A_n}u^{2(\xi-1)}|\nabla_{\gamma} u|^2\,\mathrm{d}z.
	$$
From this and \eqref{derivada-fraca-wn}, it holds that
	$$
	\begin{aligned}
		\int_{\mathbb{R}^N}|\nabla_{\gamma} v_n|^2\,\mathrm{d}z &\leq \frac{(\xi^2-2\xi+1)}{(2\xi-1)}\int_{\mathbb{R}^N}\nabla_\gamma u \cdot \nabla_\gamma  w_n\,\mathrm{d}z + \int_{\mathbb{R}^N}\nabla_\gamma u \cdot \nabla_\gamma w_n\,\mathrm{d}z
		\\
		& < \xi^2\int_{\mathbb{R}^N}\nabla_\gamma u \cdot \nabla_\gamma w_n\,\mathrm{d}z.
	\end{aligned}
	$$
Applying \eqref{solucao},
    $$
	\int_{\mathbb{R}^N}|\nabla_{\gamma} v_n|^2\,\mathrm{d}z \leq \xi^2\left(	 \int_{\R^N}Q(z)f(u)w_n\,\mathrm{d}z - \int_{\mathbb{R}^N}V(z)uw_n\,\mathrm{d}z 	\right).
	$$	
Combining the above inequality with the fact that $\xi>1$ and the equality
	$$
	\int_{\mathbb{R}^N}V(z)v_n^2\,\mathrm{d}z = \int_{\mathbb{R}^N}V(z)uw_n\,\mathrm{d}z,
	$$	
	we infer that
\begin{equation}\label{est-f(u)}
		\|v_n\|_{E_V^\gamma}^2 <\xi^2	 \int_{\R^N}Q(z)f(u)w_n\,\mathrm{d}z.
	\end{equation}
    From \eqref{fcond} and the estimate $w_n \leq u^{2\xi-1}$ in $\mathbb{R}^N$, one deduces
$$
	\int_{\R^N}Q(z)f(u)w_n\,\mathrm{d}z \leq \varepsilon\int_{\mathbb{R}^N}Q(z)u^{2\xi}\,\mathrm{d}z + C\int_{\mathbb{R}^N}Q(z)u^{2\xi+q-2}\,\mathrm{d}z.
	$$ 
    By H\"older's inequality,
	$$
	\begin{aligned}
		\int_{\R^N}Q(z)f(u)w_n\,\mathrm{d}z &\leq \varepsilon\left(\int_{\mathbb{R}^N}Q(z)\,\mathrm{d}z\right)^{\frac{1}{r_1}}\left(\int_{\mathbb{R}^N}Q(z)u^{2r_2\xi}\,\mathrm{d}z\right)^{\frac{1}{r_2}}
		\\
		&\quad + C\left(\int_{\mathbb{R}^N}Q(z)u^{2q_1\xi}\,\mathrm{d}z\right)^{\frac{1}{q_1}}\left(\int_{\mathbb{R}^N}Q(z)u^{q_2(q-2)}\,\mathrm{d}z\right)^{\frac{1}{q_2}},
	\end{aligned}
	$$	
	where $\frac{1}{r_1} + \frac{1}{r_2} =  \frac{1}{q_1} + \frac{1}{q_2}=1$, with 
$$
	\frac{2}{q-2}\leq q_2-1< q_2< \frac{2_\gamma^*}{q-2}.
	$$
	This is possible because $\frac{2_\gamma^*}{q-2} - \frac{2}{q-2}>1$, for any $q\in(2,2_\gamma^*)$. Since $\beta>N$, $Q\in L^1(\mathbb{R}^N)$. Moreover, $E_V^\gamma \hookrightarrow L_Q^{q_2(q-2)}(\mathbb{R}^N)$ (see Theorem~\ref{imersao}). Hence, if $q_1=r_2$, then
	$$
	\int_{\R^N}Q(z)f(u)w_n\,\mathrm{d}z \leq C_1\|u\|_{L^{2q_1\xi}_Q(\mathbb{R}^N)}^{2\xi},
	$$
	for some $C_1=C_1(\|u\|_{E^\gamma_V})>0$. This and \eqref{est-f(u)} infer that 
	$$
	\|v_n\|_{E_V^\gamma}^2 <\xi^2C_1\|u\|_{L^{2q_1\xi}_Q(\mathbb{R}^N)}^{2\xi}.
	$$
By Theorem~\ref{imersao} and the last inequality, we have
	$$
\left(\int_{A_n}Q(z)u^{2_\gamma^*\xi}\,\mathrm{d}z\right)^{\frac{2}{2_\gamma^*}}\leq\|v_n\|_{L^{2_\gamma^*}_Q(\mathbb{R}^N)}^2 \leq \xi^2C_2\|u\|_{L^{2q_1\xi}_Q(\mathbb{R}^N)}^{2\xi}.
	$$
where we used that $v_n=u^\xi$ in $A_n$. Taking the limit as $n\to \infty$ and using Fatou’s lemma, we reach
	$$
	\|u\|_{L^{2_\gamma^*\xi}_Q(\mathbb{R}^N)}^{2\xi} \leq \xi^2C_2\|u\|_{L^{2q_1\xi}_Q(\mathbb{R}^N)} 
	$$
    or
	\begin{equation}\label{inicial-ite-moser}
		\|u\|_{L^{2_\gamma^*\xi}_Q(\mathbb{R}^N)} \leq \xi^{\frac{1}{\xi}} C_2^{\frac{1}{2\xi}}\|u\|_{L^{2q_1\xi}_Q(\mathbb{R}^N)}.
	\end{equation}
Now, we start Moser's iteration, taking $\xi=\sigma= \frac{2_\gamma^*}{2q_1}>1$. Indeed,
	$$
	q_1=\frac{q_2}{q_2-1}=q_2\cdot\frac{1}{q_2-1}<\frac{2_\gamma^*}{q-2}\cdot \frac{q-2}{2} = \frac{2_\gamma^*}{2}.
	$$
Thus,
	$$
\|u\|_{L^{2_\gamma^*\sigma}_Q(\mathbb{R}^N)} \leq \sigma^{\frac{1}{\sigma}} C_2^{\frac{1}{2\sigma}}\|u\|_{L^{2_\gamma^*}_Q(\mathbb{R}^N)}.
	$$
    Taking $\xi=\sigma^2$ in \eqref{inicial-ite-moser} and noting that $2q_1\sigma^2=(2q_1\sigma)\sigma = 2_\gamma^*\sigma$, the last estimate, yields
	$$
	\|u\|_{L^{2_\gamma^*\sigma^2}_Q(\mathbb{R}^N)} \leq (\sigma^2)^{\frac{1}{\sigma^2}} C_2^{\frac{1}{2\sigma^2}}\|u\|_{L^{2_\gamma^*\sigma}_Q(\mathbb{R}^N)} \leq \sigma^{\frac{1}{\sigma}+\frac{2}{\sigma^2}}C_2^{\frac{1}{2\sigma} + \frac{1}{2\sigma^2}}\|u\|_{L^{2_\gamma^*}_Q(\mathbb{R}^N)}.
	$$
By continuing the iteration, we can conclude that
	$$
	\|u\|_{L^{2_\gamma^*\sigma^j}_Q(\mathbb{R}^N)} \leq \sigma^{\sum_{k=1}^\infty \frac{k}{\sigma^k}} C_2^{\sum_{k=1}^\infty \frac{1}{2\sigma^k}}\|u\|_{L_Q^{2_\gamma^*}(\mathbb{R}^N)}.
	$$
Since these series converge, we obtain
	$$
	\|u\|_{L^{2_\gamma^*\sigma^j}_Q(\mathbb{R}^N)} \leq C_3\|u\|_{L_Q^{2_\gamma^*}(\mathbb{R}^N)}.
	$$

    Now, consider $R>0$ arbitrary. From the hypothesis $\dfrac{1}{Q} \in L^\infty_{\mathrm{loc}}(\R^N)$, there exists $Q_R>0$ such that $Q(z) \geq Q_R$, for any $z \in B_R$. Thus,
	$$
	\int_{B_R}|u|^{2_\gamma^*\sigma^j} \mathrm{d}z \leq \frac{1}{Q_R}\int_{B_R}Q(z)|u|^{2_\gamma^*\sigma^j} \mathrm{d}z \leq \frac{1}{Q_R}\|u\|_{L^{2_\gamma^*\sigma^j}_Q(\mathbb{R}^N)}^{2_\gamma^*\sigma^j} \leq \frac{C_3^{2_\gamma^*\sigma^j}}{Q_R}\|u\|_{L^{2_\gamma^*}_Q(\mathbb{R}^N)}^{2_\gamma^*\sigma^j}.
	$$
Consequently,
	$$
	\|u\|_{L^{2_\gamma^*\sigma^j}(B_R)} \leq \frac{C_3}{Q_R^{\frac{1}{2_\gamma^*\sigma^j}}}\|u\|_{L_Q^{2_\gamma^*}(\mathbb{R}^N)}.
	$$
Taking $j \to \infty$, it holds that $u \in L^\infty(B_R)$, finalizing the proof.
\end{proof}

We will finish the proof of Theorem \ref{existencia_de_solucao} by proving item $(ii)$. First, we have an auxiliary result.

\begin{lemma}[$W_\gamma^{2, p}$ - Regularity]\label{wreg}
	Assume that $V \in L^{\infty}(\R^N)$ and there exists $V_0>0$ such that $V_0\leq V(z)$, for all $z\in\R^N$. If $p\in[2,2_\gamma^*]$ and $u\in E_V^\gamma$ is a weak solution of
    \begin{equation}\label{linearSG}
		-\Delta_\gamma u +V u = h \in L^p(\R^N),
	\end{equation}
	then $u \in W_\gamma^{2, p}\left(\mathbb{R}^N\right)$.
\end{lemma}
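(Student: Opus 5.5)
The plan is to reduce the statement to the $L^p$-theory for Grushin operators of Metafune, Negro and Spina \cite{Me-N-Spi-20}. Their result gives, for $1<p<\infty$ and $\lambda>0$, that $\lambda-\Delta_\gamma$ with domain $W^{2,p}_\gamma(\R^N)$ is invertible on $L^p(\R^N)$. Equivalently, for every $g\in L^p(\R^N)$ there is a unique $w\in W^{2,p}_\gamma(\R^N)$ with $\lambda w-\Delta_\gamma w=g$, and
\[
\|w\|_{W^{2,p}_\gamma}\le C\|g\|_{L^p(\R^N)}.
\]
I would take this as the black box. The work then consists of moving $V$ to the right-hand side and identifying the weak solution $u\in E_V^\gamma$ with the strong solution $w$.

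First I would rewrite \eqref{linearSG} as
\[
-\Delta_\gamma u+V_0u=h-(V-V_0)u=:g .
\]
Since $V\in L^\infty$, I need $u\in L^p(\R^N)$ to get $g\in L^p$. Because $V\ge V_0>0$, we have $\|u\|_{L^2}^2\le V_0^{-1}\|u\|_{E_V^\gamma}^2$. Since $|\nabla_\gamma u|\in L^2$, the whole-space Sobolev embedding \eqref{monti} gives $u\in L^{2_\gamma^*}(\R^N)$. Interpolation then yields $u\in L^p$ for every $p\in[2,2^*_\gamma]$, which is exactly where the restriction on $p$ enters. Hence $g\in L^p(\R^N)$, and also $g\in L^2$ once $h\in L^2$. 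If $h$ is only in $L^p$, I would instead work with $h$ directly in the duality step below.

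Second, let $w\in W^{2,p}_\gamma$ be the strong solution of $-\Delta_\gamma w+V_0w=g$ given by \cite{Me-N-Spi-20}. I would show $w=u$ by a duality argument. For $\varphi\in C_0^\infty(\R^N)$, the weak formulation for $u$ gives
\[
\int u\,(-\Delta_\gamma\varphi+V_0\varphi)\,dz=\int g\varphi\,dz .
\]
Integrating by parts, which is legitimate for $w\in W^{2,p}_\gamma$ by density of test functions in that space (again from \cite{Me-N-Spi-20}), gives the same identity for $w$. So $z:=u-w\in L^p$ satisfies
\[
\int z\,(V_0-\Delta_\gamma)\varphi\,dz=0 \quad\text{for all } \varphi\in C_0^\infty .
\]
Since $C_0^\infty$ is a core and $V_0-\Delta_\gamma$ is surjective on $L^{p'}$ with domain $W^{2,p'}_\gamma$, the set $(V_0-\Delta_\gamma)C_0^\infty$ is dense in $L^{p'}$. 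Therefore $z=0$, so $u=w\in W^{2,p}_\gamma(\R^N)$.

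The main obstacle is this identification step. It needs two inputs from \cite{Me-N-Spi-20}: the core property of $C_0^\infty$ in $W^{2,p'}_\gamma$, and solvability in $L^{p'}$ for the dual exponent $p'\in[2_\gamma^{*\prime},2]$. Both hold there for all $1<p<\infty$ with the homogeneous weights $|x|^{|\beta|\gamma}$ used in \eqref{w2norm}. Should the core property be unavailable, I would fall back on an $L^2$ argument. Testing the weak formulation with $u$ identifies $u$ with the $L^2$ resolvent solution via Lax–Milgram in $E_{V_0}^\gamma=H^{1,\gamma}$. Then I would use consistency of the resolvents $(V_0-\Delta_\gamma)^{-1}$ on $L^2\cap L^p$, since they are given by the same semigroup kernel, after approximating $g$ by functions in $L^2\cap L^p$.
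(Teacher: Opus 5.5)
Your proposal is correct, granted the inputs you take from \cite{Me-N-Spi-20}, and it follows a genuinely different route from the paper.

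\textbf{The paper's route.} It uses only the a priori estimate $\|v\|_{W^{2,p}_\gamma}\le C(\|v\|_{L^p}+\|\Delta_\gamma v\|_{L^p})$ (their Theorem 4.21). It approximates $u$ in $E_V^\gamma$ by an arbitrary sequence $u_n\in C_0^\infty(\R^N)$ and sets $h_n:=-\Delta_\gamma u_n+Vu_n$. It then claims $h_n\to h$ in $L^p(\R^N)$, obtains a uniform $W^{2,p}_\gamma$ bound, and concludes by reflexivity and uniqueness of weak limits.

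\textbf{Your route.} You solve $(V_0-\Delta_\gamma)w=g$ in $W^{2,p}_\gamma$ and identify $w=u$ by duality, using that $(V_0-\Delta_\gamma)C_0^\infty$ is dense in $L^{p'}$. This needs more from \cite{Me-N-Spi-20} than the paper quotes: invertibility of $\lambda-\Delta_\gamma$ on $L^p$ and on $L^{p'}$ with domain $W^{2,\cdot}_\gamma$, and density of $C_0^\infty$ in $W^{2,p'}_\gamma$. You should confirm these are actually stated there.

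\textbf{What your route buys.} It never has to control $\Delta_\gamma u_n$ along an approximating sequence, and that is exactly where the paper's argument is weak. Convergence $u_n\to u$ in $E_V^\gamma$ only gives $\|h_n-h\|_{(E_V^\gamma)'}=\|u_n-u\|_{E_V^\gamma}\to 0$. The paper's duality step does not upgrade this to $L^p$. The approximant $\varphi_{l_0}$ depends on the norming function $\varphi$, hence on $n$, so $\|\varphi_{l_0}\|_{W^{1,2}_\gamma}$ is not uniformly bounded. In fact one cannot expect $h_n$ to be bounded in $L^p$ for an arbitrary approximating sequence. The perturbation $n^{-3/2}\sin(nx_1)\chi(z)$, with a fixed cutoff $\chi$, tends to $0$ in $E_V^\gamma$, while its $\Delta_\gamma$ has $L^p$ norm of order $n^{1/2}$. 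Your identification step sidesteps this. Your use of \eqref{monti} plus interpolation to get $u\in L^p$, hence $g\in L^p$, also makes explicit where $p\in[2,2^*_\gamma]$ enters.

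\textbf{Two small points.}

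First, the identity $\int u(-\Delta_\gamma\varphi+V\varphi)\,dz=\int h\varphi\,dz$ deserves a line of justification. Approximate $u$ in $E_V^\gamma$ and use that $\partial_{x_i}$ and $|x|^\gamma\partial_{y_j}$ are formally skew-adjoint on $C_0^\infty$, since $|x|^\gamma$ does not depend on $y$.

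Second, your $L^2$ fallback is the shakier part. When $h\notin L^2$, approximating $g$ by $g_k\in L^2\cap L^p$ in $L^p$ alone does not force the corresponding $L^2$ resolvent solutions to converge to $u$ in $E_{V_0}^\gamma$. You would also need $g_k\to g$ in $(E_{V_0}^\gamma)'$. So the core-based duality argument is the one to keep.
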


\begin{proof}
	Let $u \in C_0^{\infty}\left(\R^N\right)$ be a solution for \eqref{linearSG}. By Theorem 4.21 in \cite{Me-N-Spi-20}, 
    $$\|v\|_{W^{2,p}_\gamma(\R^N)} \leq C\left(\|v\|_{L^p(\R^N)} + \| \Delta_\gamma v\|_{L^p(\R^N)}\right) ,$$
    for all $v\in W^{2,p}_\gamma(\R^N)$. Hence, we have
	\begin{align*}
		\|u\|_{W^{2,p}_\gamma(\R^N)}  
		\leq & C\left\|-\Delta_\gamma u + V u \right\|_{L^p(\R^N)} + (1+C\|V\|_{L^\infty(\R^N)})\|u\|_{L^p(\R^N)} ,
	\end{align*}
    since $V\in L^\infty(\R^N)$. Then, we obtain
	\begin{equation}\label{reg1}
		\|u\|_{W^{2,p}_\gamma(\R^N)}  \leq C_1\left( \left\|u \right\|_{L^p(\R^N)} +  \|h\|_{L^p(\R^N)} \right) .
	\end{equation}

Now, if $u\in E_V^\gamma$ is a weak solution of that equation \eqref{linearSG}, the definition of \(E_V^\gamma\) allows us to take a sequence $(u_n)_{n}$ in $C_0^{\infty}\left(\mathbb{R}^N\right)$ such that $u_n \rightarrow u$ in $E_V^\gamma$, as $n\rightarrow\infty$. Next, define \(h_n \in L^p\left(\mathbb{R}^N\right)\) by
$$
	h_n:=-\Delta_\gamma u_n+V u_n, 
	$$
for each \(n \in \mathbb{N}\). Notice that \eqref{reg1} holds for $u_n$. We claim that $h_n \rightarrow h$ in $L^p\left(\mathbb{R}^N\right)$. Indeed, for every $\varphi \in C_0^\infty\left(\mathbb{R}^N\right)$, we have
\[
\left|\int_{\mathbb{R}^N}\left(h_n-h\right) \varphi \, \mathrm{d}z \right| = \left|\int_{\mathbb{R}^N} \left(\nabla_\gamma\left(u_n-u\right) \cdot \nabla_\gamma \varphi  +  V(z)\left(u_n-u\right) \varphi \right) \, \mathrm{d}z\right|.
\]
Again, using Hölder's inequality, we can conclude that
\begin{equation}\label{distest}
    \begin{aligned}
\left|\int_{\mathbb{R}^N}\left(h_n-h\right) \varphi \, \mathrm{d}z \right| \leq C_2\left\|u_n-u\right\|_{E_V^\gamma} \|\varphi \|_{W^{1, 2}_\gamma(\R^N)} \to 0,
    \end{aligned}
\end{equation}
as $n\rightarrow\infty$, for all $\varphi \in C_0^{\infty}(\mathbb{R}^N)$.

	 This means that $h_n \rightarrow h$ in the sense of the distributions.	Now, take $\left\{\varphi_l\right\}_{l\in\mathbb{N}} \subset C_0^{\infty}\left(\mathbb{R}^N\right)$, with $\varphi_l \rightarrow \varphi \in L^{p'}(\R^N)$, $\frac{1}{p} + \frac{1}{p'}=1$, and $\|\varphi\|_{L^{p'}(\R^N)}=1$. Then, \eqref{distest} and Hölder inequality give us
	\begin{align*}
		\left|\int_{\R^N} \left(h_n-h\right) \varphi \, \mathrm{d}z \right| 
		\leq  C_2\left\|u_n-u\right\|_{E_V^\gamma} \|\varphi_l \|_{W_\gamma^{1, 2}(\R^N)} + \|h_n-h\|_{L^p(\R^N)}\|\varphi_l - \varphi\|_{L^{p'}(\R^N)} .
	\end{align*}
	Since $\varphi_l \rightarrow \varphi$ in $L^{p'}(\R^N)$, given $0<\varepsilon<1$, there is $l_0 \in \mathbb{N}$ such that $\left\|\varphi - \varphi_{l_0}\right\|_{L^{p'}(\R^N)}<\varepsilon$. Then, by duality, we have
	$$
	(1-\varepsilon)\left\|h_n-h\right\|_{L^p(\R^N)} \leq C_2\left\|u_n-u\right\|_{E_V^\gamma} \|\varphi_{l_0} \|_{W_\gamma^{1, 2}(\R^N)} \rightarrow 0 ,
	$$
    as $n \rightarrow \infty$.

    As noticed by Alves and de Holanda \cite{Alves-Angelo}, we have the Sobolev embedding
    $$W_\gamma^{1, 2}(\R^N) \hookrightarrow L^p(\R^N),$$
    for $p\in[2,2_\gamma^*]$. Recall also that the assumptions on $V$ imply that the norms in $E_V^\gamma$ and $W_\gamma^{1, 2}(\R^N)$ are equivalent, and that $(u_n)_n$ and $(h_n)_n$ are uniformly bounded in $E_V^\gamma$ and $L^p(\R^N)$, respectively. These facts with \eqref{reg1}, and the aforementioned Sobolev embedding give
	\begin{equation}\label{reg2}
		\left\|u_n\right\|_{W_\gamma^{2, p}(\R^N)} \leq C_3\left( \left\|u_n \right\|_{W_\gamma^{1, 2}(\R^N)} +  \|h_n\|_{L^p(\R^N)} \right) \leq C_4,
	\end{equation}
	uniformly on $n\in\mathbb{N}$. Therefore, from the reflexivity of $W_\gamma^{2, p}(\R^N)$, there exists a subsequence $u_{n_i} \rightharpoonup \tilde{u}$ in $W_\gamma^{2, p}(\R^N)$, as $i\rightarrow \infty$. By the uniqueness of the weak limit in  $L^p(\R^N)$, we obtain that $u=\tilde{u}$ and 
	\begin{equation*}
		\|u\|_{W_\gamma^{ 2,p}} \leq  C_4,
	\end{equation*}
    completing the proof.
\end{proof}

It only remains to establish part (ii) of Theorem \ref{existencia_de_solucao}.

\begin{proof} 
We will show that \(Q f(u) \in L^s(\mathbb{R}^N)\), with $s=\frac{2^*_\gamma}{q-1}$. In fact, from \eqref{fcond} and the estimate $(a+b)^s\leq C_s(a^s + b^s)$, we obtain
\[
\int_{\R^N} \left|Q(z) f(u)\right|^s \, \mathrm{d} z \leq \varepsilon^s \int_{\R^N} Q^s(z)|u|^s\, \mathrm{d} z + C^s\int_{\R^N} Q^s(z)|u|^{s(q-1)} \, \mathrm{d} z.
\]
On the one hand, Hölder's  inequality together with \ref{Q} gives
\[
\int_{\R^N} Q^s(z)|u|^s\, \mathrm{d} z \leq C_Q^{s-1}\|Q\|_{L^1(\R^N)}^{1-\frac{s}{2^*_\gamma}} \|u\|_{L_Q^{2^*_\gamma}(\R^N)}^s.
\]
On other hand, by \ref{Q}, it holds that
\[
\int_{\R^N} Q^s(z)|u|^{s(q-1)} \, \mathrm{d} z \leq C_Q^{s-1}\|u\|_{L_Q^{2^*_\gamma}(\R^N)}^{2_\gamma^*}.
\]
The last three estimates combined with Theorem \ref{imersao}, imply that 
\[
\int_{\R^N} \left|Q(z) f(u)\right|^s \, \mathrm{d} z<\infty,
\]
where we used that \(Q \in L^1(\mathbb{R}^N)\), because \(\beta>N\). Therefore, Lemma \ref{wreg} gives the result.
\end{proof}

\section*{Acknowledgments} 

We thank Prof. A. Kogoj for many fruitful discussions about this problem.

J. Carvalho has been supported by by CNPq with grant 300853/2025-4.

A. Viana is partially CAPES-Humboldt Research Fellowship PE32739979, and is part of Universal Fapitec Project n° 019203.01303/2024-1.

Part of this research was conducted while A. Viana was visiting the Abdus Salam International Centre for Theoretical Physics (ICTP) and the \textit{Institut f\"ur Angewandte Mathematik, Universit\"at Ulm}. He would like to thank these institutions for the hospitality.

\end{document}